\theoremstyle{plain}
\newtheorem{theorem}{Theorem}
\newtheorem{lemma}{Lemma}
\newtheorem{assumption}{Assumption}
\def \sd  {{\underline{s}}}
\def \td  {{\underline{t}}}
\def \ud  {{\underline{u}}}
\def \so  {{\overline{s}}}
\def \uo  {{\overline{u}}}
\def \hX  {{\widehat{X}}}
\def \bX  {{\bar{X}}}
\def \tn  {{t_n}}
\def \tnp {{t_{n+1}}}
\def \EE  {{\mathbb{E}}}
\def \PP  {{\mathbb{P}}}
\def \RR  {{\mathbb{R}}}
\def \D   {{\rm d}}
\def \DW  {{\Delta W}}
\def \halfs {{\textstyle \frac{1}{2}}}
\newcommand{\fracs}[2]{{\textstyle \frac{#1}{#2}}}
\begin{document}

\begin{frontmatter}
\title{Adaptive Euler-Maruyama method for\\ 
       SDEs with non-globally Lipschitz drift:\\
       part I, finite time interval}
\runtitle{Adaptive Euler-Maruyama method for non-Lipschitz drift}

\begin{aug}
\author{\fnms{Wei} \snm{Fang}\ead[label=e1]{wei.fang@maths.ox.ac.uk}}
\and
\author{\fnms{Michael~B.} \snm{Giles}\ead[label=e2]{mike.giles@maths.ox.ac.uk}}

\runauthor{W.~Fang \& M.B.~Giles}

\affiliation{University of Oxford}
\address{Mathematical Institute\\University of Oxford\\Oxford OX2 6GG\\United Kingdom\\
\printead{e1}\\
\phantom{E-mail:\ }\printead*{e2}}
\end{aug}

\begin{abstract}
This paper proposes an adaptive timestep construction 
for an Euler-Maruyama approximation of SDEs with a drift 
which is not globally Lipschitz.  It is proved that 
if the timestep is bounded appropriately, then over
a finite time interval the numerical approximation is 
stable, and the expected number of timesteps is finite. 
Furthermore, the order of strong convergence is the 
same as usual, i.e.~order $\halfs$ for SDEs with a 
non-uniform globally Lipschitz volatility, and order 
$1$ for Langevin SDEs with unit volatility and a drift 
with sufficient smoothness.  The analysis is supported 
by numerical experiments for a variety of SDEs.
\end{abstract}

\begin{keyword}[class=MSC]
\kwd{60H10}
\kwd{60H35}
\kwd{65C30}
\end{keyword}

\begin{keyword}
\kwd{SDE}
\kwd{Euler-Maruyama}
\kwd{strong convergence}
\kwd{adaptive timestep}
\end{keyword}

\end{frontmatter}

\section{Introduction}

In this paper we consider an $m$-dimensional stochastic 
differential equation (SDE) driven by a $d$-dimensional 
Brownian motion:
\begin{equation}
\D X_t = f(X_t)\,\D t + g(X_t)\,\D W_t,
\label{SDE}
\end{equation}
with a fixed initial value $X_0.$ The standard theory assumes the drift 
$f: \RR^m\!\rightarrow\!\RR^m$ 
and the volatility 
$g: \RR^m\!\rightarrow\!\RR^{m\times d}$ 
are both globally Lipschitz.  Under this 
assumption, there is well-established theory on the existence 
and uniqueness of strong solutions, and the numerical 
approximation $\hX_t$ obtained from the Euler-Maruyama 
discretisation
\[
\hX_{(n\!+\!1)h} = \hX_{nh} + f(\hX_{nh}) \, h + g(\hX_{nh}) \, \DW_n
\]
using a uniform timestep of size $h$ with Brownian increments
$\Delta W_n$, plus a suitable 
interpolation within each timestep, is known \cite{kp92}
to have a strong error which is $O(h^{1/2})$ so that 
for any $T, p > 0$
\[
\EE\left[\sup_{0\leq t \leq T}\|\hX_t \!-\! X_t \|^p\right] = O(h^{p/2}).
\]

The interest in this paper is in other cases in which $g$ is 
again globally Lipschitz, but $f$ is only locally Lipschitz.
If, for some $\alpha, \beta \geq 0$,  $f$ also
satisfies the one-sided growth condition\\[-0.2in]
\[
\langle x,f(x)\rangle \leq \alpha\|x\|^2+\beta,
\]
where $\langle \cdot, \cdot \rangle$ denotes an inner product,
then it is again possible to prove the existence and uniqueness 
of strong solutions (see Theorems 2.3.5 and 2.4.1 in \cite{mao97}).  
Furthermore (see Lemma 3.2 in \cite{hms02}), these solutions 
are stable in the sense that for any $T,p > 0$\\[-0.1in]
\[
\EE\left[\sup_{0\leq t \leq T}\| X_t \|^p\right] < \infty.
\]
The problem is that the numerical approximation given by 
the uniform timestep Euler-Maruyama discretisation
may not be stable.  Indeed, for the SDE
\begin{equation}
\D X_t = - X_t^3 \,\D t + \D W_t,
\label{eq:cubic_drift}
\end{equation}
it has been proved \cite{hjk11} that for any 
$T\!>\!0$ and $p\!\geq\!2$
\[
\lim_{h\rightarrow 0} \EE\left[ \| \hX_T \|^p \right] = \infty.
\]

This behaviour has led to research on numerical methods which 
achieve strong convergence for these SDEs with a non-globally 
Lipschitz drift.  One key paper in this area is by 
Higham, Mao \& Stuart \cite{hms02}.  First, assuming a locally 
Lipschitz condition for both the drift and the volatility, 
they prove that if the uniform timestep Euler-Maruyama 
discretisation is stable then it also converges strongly.
Assuming the drift satisfies a one-sided Lipschitz 
condition and a polynomial growth condition, they then prove 
stability and the standard order $\halfs$ 
strong convergence for two uniform timestep implicit methods, 
the Split-Step Backward Euler method (SSBE):
\begin{eqnarray*}
\hX_{nh}^*     &=& \hX_{nh} + f(\hX_{nh}^*)\, h\\
\hX_{(n\!+\!1)h} &=& \hX_{nh}^* + g(\hX_{nh}^*)\, \Delta W_n.
\end{eqnarray*}
and the drift-implicit Backward Euler method:
\[
\hX_{(n\!+\!1)h} = \hX_{nh} + f(\hX_{(n\!+\!1)h})\, h + g(\hX_{nh})\, \Delta W_n.
\]
Mao \& Szpruch \cite{ms13} prove that the implicit $\theta$-Euler method
\[
\hX_{(n\!+\!1)h} = \hX_{nh} + \theta f(\hX_{(n\!+\!1)h})\, h
        + (1\!-\!\theta) f(\hX_{nh})\, h + g(\hX_{nh})\, \Delta W_n,
\]
converges strongly for $\halfs\!\leq\!\theta\!\leq\!1$ 
under more general conditions which permit a non-globally Lipschitz volatility.

However, except for some special cases, implicit methods can
require significant additional computational costs, especially
for multi-dimensional SDEs; therefore, a stable explicit method 
is desired. 
Milstein \& Tretyakov proposed a general approach
which discards approximate paths that cross a sphere with
a sufficiently large radius $R$ \cite{mt05}.  However,  it is 
not easy to quantify the errors due to $R$.
The explicit tamed Euler method proposed by Hutzenthaler, 
Jentzen \& Kloeden \cite{hjk12} is
\[
\hX_{(n\!+\!1)h} = \hX_{nh} + \frac{ f(\hX_{nh})}{1\!+\!C\,h\|f(\hX_{nh})\|}\, h
               + g(\hX_{nh})\, \DW_n,
\]
for some fixed constant $C\!>\!0$.  They prove both stability 
and the standard order $\halfs$ strong convergence.  This 
approach has been extended to the tamed Milstein method by 
Wang \& Gan \cite{wg13}, proving order 1 strong convergence
for SDEs with commutative noise.
Finally, Mao \cite{mao15} proposes a truncated Euler method which
has the form
\begin{eqnarray*}
\hX_{(n\!+\!1)h} &=& \hX_{nh} 
+ f\left( \min( K \|\hX_{nh}\|^{-1}\!, 1 )\, \hX_{nh}\right) \, h \\
&& ~~~~
+ g\left( \min( K \|\hX_{nh}\|^{-1}\!, 1 )\, \hX_{nh}\right) \, \DW_n.
\end{eqnarray*}
By making $K$ a function of $h$, strong convergence is proved 
for SDEs satisfying a Khasminskii-type condition which again
allows a non-globally Lipschitz volatility;
in \cite{mao16} it is proved that the order of 
convergence is arbitrarily close to $\halfs$.

In this paper, we propose instead to use the standard explicit 
Euler-Maruyama method, but with an adaptive timestep $h_n$ which 
is a function of the current approximate solution $\hX_{t_n}$.
The idea of using an adaptive timestep comes from considering 
the divergence of the uniform timestep method for the SDE 
(\ref{eq:cubic_drift}). When there is no noise, the requirement
for the explicit Euler approximation of the corresponding ODE 
to have a stable monotonic decay is that its timestep satisfies 
$h\!<\!\hX_{t_n}^{-2}$.  An intuitive explanation for the 
instability of the uniform timestep Euler-Maruyama approximation
of the SDE is that there is always a very small probability of
a large Brownian increment $\DW_n$ which pushes the 
approximation $\hX_{t_{n+1}}$ into the region $h\!>\!2\,\hX_{t_{n+1}}^{-2}$
leading to an oscillatory super-exponential growth.  Using
an adaptive timestep avoids this problem.

Adaptive timesteps have been used in previous research to improve 
the accuracy of numerical approximations.  Some approaches use
local error estimation to decide whether or not to refine the 
timestep \cite{gl97,mauthner98,lamba03} while others are similar 
to ours in setting the size of each timestep based on the current 
path approximation $\hX_t$ \cite{hmr01,muller04}.  However,
these all assume globally Lipschitz drift and volatility.
The papers by Lamba, Mattingly \& Stuart \cite{lms07} and
Lemaire \cite{lemaire07} are more relevant to the analysis in 
this paper.  They both consider drifts which are not globally 
Lipschitz, but they assume a dissipative condition which is 
stronger than the conditions assumed in this paper.  Lamba, 
Mattingly \& Stuart \cite{lms07} prove strong stability but not 
the order of strong convergence, while Lemaire \cite{lemaire07}
considers an infinite time interval with a timestep with an 
upper bound which decreases towards zero over time, and proves 
convergence of the empirical distribution to the invariant 
distribution of the SDE.

In this paper we are concerned with strong convergence, not 
weak convergence, because our interest is in using the numerical 
approximation as part of a multilevel Monte Carlo (MLMC) 
computation \cite{giles08,giles15} for which the strong 
convergence properties are key in establishing the rate of 
decay of the variance of the multilevel correction.  
Usually, MLMC is used with a geometric sequence of time grids, 
with each coarse timestep corresponding to a fixed number of 
fine timesteps.  However, it has been shown that it 
is not difficult to implement MLMC using the same driving 
Brownian path for the coarse and fine paths, even when they 
have no time points in common \cite{glw16}.

Paper \cite{glw16} also provides another motivation for this
paper, the analysis of Langevin equations with a drift 
$-\nabla V(X_t)$ where $V(x)$ is a potential function which
comes from the modelling of molecular dynamics. \cite{glw16}
considers the FENE (Finitely Extensible Nonlinear Elastic) model
which in the case of a molecule with a single bond has a 
3D potential $-\mu \log (1 \!-\! \|x\|^2)$.
Considerations of stability and accuracy lead to the use 
of a timestep of the form
$ 
\delta\, (1 \!-\! \|\hX_n\|)^2/\max(2\mu, 36),
$
for some $0\!<\!\delta\!\leq \!1$.  
Because of this, we pay particular attention to the case of 
Langevin equations, and for these we prove first order strong 
convergence, the same as for the uniform timestep Euler-Maruyama 
method for globally Lipschitz drifts.  Unfortunately our
assumptions do not cover the case of the FENE model as we 
require $-\nabla V(x)$ to be locally Lipschitz on $\RR^m$.

The rest of the paper is organised as follows.  Section 2 states
the main theorems, and proves some minor lemmas.  Section 3
has a number of example applications, many from \cite{hj15},
illustrating how suitable adaptive timestep functions can be defined.  
It also presents some numerical results comparing the performance of 
the adaptive Euler-Maruyama method to other methods.  Section 4 has 
the proofs of the three main theorems, and finally Section 5 has some 
conclusions and discusses the extension to the infinite time interval 
which will be covered in a future paper.

In this paper we assume the following setting and notation.
Let $T\!>\!0$ be a fixed positive real number, and let 
$(\Omega,\mathcal{F},\PP)$ be a probability space with 
normal filtration $(\mathcal{F}_t)_{t\in[0,T]}$ corresponding to
a $d$-dimensional standard Brownian motion 
$W_t=(W^{(1)},W^{(2)},\ldots,W^{(d)})_t^T$.
We denote the vector norm by 
$\|v\|\triangleq(|v_1|^2+|v_2|^2+\ldots+|v_m|^2)^{\frac{1}{2}}$, 
the inner product of vectors $v$ and $w$ by 
$\langle v,w \rangle\triangleq v_1w_1+v_2w_2+\ldots+v_mw_m$, 
for any $v,w\in\RR^m$ and the Frobenius matrix norm by 
$\|A\|\triangleq \sqrt{\sum_{i,j}A_{i,j}^2}$ 
for all $A\in\RR^{m\times d}.$

\section{Adaptive algorithm and theoretical results}

\subsection{Adaptive Euler-Maruyama method}

The adaptive Euler-Maruyama discretisation is
\[
\tnp = \tn + h_n, ~~~~
\hX_\tnp = \hX_\tn + f(\hX_\tn)\, h_n + g(\hX_\tn)\, \DW_n,
\]
where 
$h_n\triangleq h(\hX_\tn)$ and $\DW_n \triangleq W_\tnp \!-\! W_\tn$,
and there is fixed initial data $t_0\!=\!0,\ \hX_0\!=\!X_0$.

One key point in the analysis is to prove that $t_n$ increases 
without bound as $n$ increases.  More specifically, the analysis 
proves that for any $T\!>\!0$, almost surely for each path 
there is an $N$ such that $t_N\!\geq\!T$.

We use the notation 
$
\td \triangleq \max\{t_n: t_n\!\leq\! t\},\
n_t\triangleq\max\{n: t_n\!\leq\! t\}
$
for the nearest time point before time $t$, and its index.

We define the piecewise constant interpolant process $\bX_t=\hX_\td$ 
and also define the standard continuous interpolant \cite{kp92} as
\[
\hX_t=\hX_\td+f(\hX_\td) (t\!-\!\td) + g(\hX_\td) (W_t\!-\!W_\td),
\]
so that $\hX_t$ is the solution of the SDE
\begin{equation}
\D \hX_t =  f(\hX_\td)\, \D t + g(\hX_\td) \, \D W_t = f(\bX_t)\,\D t + g(\bX_t)\, \D W_t.
\label{eq:approx_SDE}
\end{equation}

In the following subsections, we state the key results on 
stability and strong convergence, and related results on 
the number of timesteps, introducing various assumptions 
as required for each.  The main proofs are deferred to 
Section \ref{sec:proofs}.

\subsection{Stability}

\begin{assumption}[Local Lipschitz and linear growth]
\label{assp:linear_growth}
$f$ and $g$ are both locally Lipschitz, so that for any 
$R\!>\!0$ there is a constant $C_R$ such that
\[
\| f(x)\!-\!f(y) \| + \| g(x)\!-\!g(y) \| \leq C_R\, \|x\!-\!y\| 
\]
for all $x,y\in \RR^m$ with $\|x\|,\|y\|\leq R$.
Furthermore, there exist constants $\alpha, \beta \geq 0$ 
such that for all $x\in \RR^m$, $f$ satisfies the one-sided 
linear growth condition:
\begin{equation}
\langle x,f(x)\rangle \leq \alpha\|x\|^2+\beta,
\label{eq:onesided_growth}
\end{equation}
and $g$ satisfies the linear growth condition:
\begin{equation}
\|g(x)\|^2\leq \alpha\|x\|^2+\beta.
\label{eq:g_growth}
\end{equation} 
\end{assumption}

Together, (\ref{eq:onesided_growth}) and (\ref{eq:g_growth}) 
imply the monotone condition
\[
\langle x,f(x)\rangle + \halfs \|g(x)\|^2 \leq 
\fracs{3}{2} ( \alpha\|x\|^2\!+\!\beta),
\]
which is a key assumption in the analysis of Mao \& Szpruch 
\cite{ms13} and Mao \cite{mao15} for SDEs with volatilities 
which are not globally Lipschitz. 
However, in our analysis we choose to use this slightly 
stronger assumption, which provides the basis for the following 
lemma on the stability of the SDE solution.

\begin{lemma}[SDE stability]

\label{lemma:SDE_stability}

If the SDE satisfies Assumption \ref{assp:linear_growth},
then for all $p\!>\!0$ 
\[
\EE\left[ \, \sup_{0\leq t\leq T} \|X_t\|^p \right] < \infty.
\]
\end{lemma}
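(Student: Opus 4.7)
The plan is to combine a standard localization argument with It\^o's formula applied to a Lyapunov function, followed by Gronwall's inequality and the Burkholder--Davis--Gundy (BDG) inequality to control the supremum. Since Assumption~\ref{assp:linear_growth} provides local Lipschitz coefficients together with a one-sided linear growth condition, strong existence and uniqueness up to an explosion time $\tau_\infty$ is standard (Theorems~2.3.5 and~2.4.1 of \cite{mao97}); the content of the lemma is therefore non-explosion together with finiteness of the $L^p$ norm of the supremum. By Jensen's inequality it suffices to treat $p \geq 2$.

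First I would introduce the stopping times $\tau_R = \inf\{t \geq 0 : \|X_t\| \geq R\}$ and apply It\^o's formula to the Lyapunov function $V(x) = (1+\|x\|^2)^{p/2}$ evaluated at $X_{t \wedge \tau_R}$. Using the explicit forms of $\nabla V$ and $\nabla^2 V$ together with (\ref{eq:onesided_growth}), (\ref{eq:g_growth}), and the elementary inequality $\|g(x)^T x\|^2 \leq \|g(x)\|^2 \|x\|^2$, the generator applied to $V$ is bounded pointwise by $C\, V(x)$ for some $C = C(p,\alpha,\beta)$. Taking expectations kills the local martingale thanks to the cutoff at $\tau_R$, and Gronwall then yields $\EE[V(X_{t \wedge \tau_R})] \leq V(X_0)\, e^{CT}$ uniformly in $R$. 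Letting $R \to \infty$ and invoking Fatou's lemma proves both non-explosion on $[0,T]$ and $\sup_{t \leq T} \EE[\|X_t\|^p] < \infty$.

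To promote this to a bound on the supremum inside the expectation, I would take the supremum in $s \in [0, t \wedge \tau_R]$ before taking expectation in the It\^o identity. The martingale term becomes $\sup_{s \leq t \wedge \tau_R} |M_s|$, whose expectation is controlled via BDG by $\EE[\langle M \rangle_{t \wedge \tau_R}^{1/2}]$; a Young-type inequality then splits this into an $\varepsilon$-multiple of $\EE[\sup_{s \leq t \wedge \tau_R} V(X_s)]$, which can be absorbed on the left, plus a $C_\varepsilon$-multiple of $\EE \int_0^{t \wedge \tau_R} V(X_s)\, \D s$, which is already controlled by the previous step. A final Gronwall and a last Fatou pass close the argument.

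The only mildly delicate step is the Hessian bound: when $p > 2$ the $xx^T$ contribution to $\nabla^2 V$ is non-negative and must be absorbed using $\|g^T x\|^2 \leq \|g\|^2 \|x\|^2$ combined with (\ref{eq:g_growth}), whereas when $p < 2$ this term is non-positive and can simply be dropped. Apart from this case split the argument is routine and parallels Lemma~3.2 of \cite{hms02}; the slightly stronger assumption (\ref{eq:g_growth}) imposed here on $g$ actually makes the necessary estimates cleaner.
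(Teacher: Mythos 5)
Your proposal is correct and follows essentially the same route as the paper, whose proof of this lemma simply cites Lemma~3.2 of \cite{hms02} and notes that only the conditions of Assumption~\ref{assp:linear_growth} are used there. The steps you spell out — localization by $\tau_R$, It\^o's formula for $V(x)=(1+\|x\|^2)^{p/2}$, the generator bound from (\ref{eq:onesided_growth})--(\ref{eq:g_growth}), Gr\"{o}nwall plus Fatou for non-explosion, and the Burkholder--Davis--Gundy/Young absorption to move the supremum inside the expectation — are exactly the contents of that cited proof, so you have reconstructed the referenced argument rather than provided a genuinely different one.
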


\begin{proof}
The proof is given in Lemma 3.2 in \cite{hms02}; the statement of 
that lemma makes stronger assumptions on $f$ and $g$, corresponding 
to (\ref{eq:onesided_Lipschitz}) and (\ref{eq:g_Lipschitz}), but the 
proof only uses the conditions in Assumption \ref{assp:linear_growth}.
\end{proof}

\vspace{0.1in}

We now specify the critical assumption about the adaptive timestep.

\begin{assumption}[Adaptive timestep] 
\label{assp:timestep}

The adaptive timestep function $h: \RR^m\rightarrow \RR^+$ is 
continuous and strictly positive, and there exist constants 
$\alpha, \beta > 0$ such that for all 
$x\in \RR^m$, $h(x)$ satisfies the inequality
\begin{equation}
\langle x, f(x) \rangle + \halfs \, h(x)\, \|f(x)\|^2 \leq \alpha \|x\|^2 + \beta.
\label{eq:timestep}
\end{equation}
\end{assumption}

Note that if another timestep function $h^\delta(x)$ is smaller than 
$h(x)$, then  $h^\delta(x)$ also satisfies the Assumption \ref{assp:timestep}.  Note 
also that the form of (\ref{eq:timestep}), which is motivated by 
the requirements of the proof of the next theorem, is very similar 
to (\ref{eq:onesided_growth}).  Indeed, if (\ref{eq:timestep}) is 
satisfied then (\ref{eq:onesided_growth}) is also true for the 
same values of $\alpha$ and $\beta$.

\begin{theorem}[Finite time stability]

\label{thm:stability}

If the SDE satisfies Assumption \ref{assp:linear_growth}, and the 
timestep function $h$ satisfies Assumption \ref{assp:timestep},
then $T$ is almost surely attainable
(i.e.~for $\omega\in\Omega$, 
$\PP(\exists N(\omega)<\infty~\mbox{s.t.}~t_{N(\omega)}\!\geq\!T)=1$) 
and for all $p\!>\!0$ there exists a constant $C_{p,T}$ which 
depends solely on $p$, $T$ and the constants $\alpha, \beta$ in
Assumption \ref{assp:timestep}, such that
\[
\EE\left[ \, \sup_{0\leq t\leq T} \|\hX_t\|^p \right] < C_{p,T}.
\]
\end{theorem}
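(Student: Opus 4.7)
My plan is to proceed in three stages: a localisation that reduces almost-sure attainability of $T$ to a moment bound, the moment bound itself at $p=2$, and a lift to general $p>0$.

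For the localisation, introduce $\tau_R = \inf\{t \ge 0 : \|\hX_t\|\ge R\}$. Continuity and strict positivity of $h$ give $h_R := \inf_{\|x\|\le R} h(x) > 0$, so on $\{\tau_R > T\}$ at most $\lceil T/h_R\rceil$ timesteps can fit in $[0,T]$, and in particular some $t_N$ must reach $T$. Thus $\PP(T \text{ attained}) \ge \PP(\tau_R > T)$, and attainability will follow from a uniform-in-$R$ bound on $\EE[\sup_{t\le T\wedge\tau_R}\|\hX_t\|^2]$ via Markov's inequality.

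For the moment bound itself, I would square the scheme and use $\EE[\DW_n\mid\mathcal F_{\tn}]=0$ together with $\EE[\|g(\hX_{\tn})\DW_n\|^2\mid\mathcal F_{\tn}]=h_n\|g(\hX_{\tn})\|^2$ to obtain, writing $V_n := \|\hX_{\tn}\|^2$,
\[
\EE\!\left[V_{n+1} \mid \mathcal F_{\tn}\right] = V_n + h_n\!\left(2\langle \hX_{\tn}, f(\hX_{\tn})\rangle + h_n\|f(\hX_{\tn})\|^2\right) + h_n\|g(\hX_{\tn})\|^2.
\]
The bracketed quantity is exactly twice the left-hand side of (\ref{eq:timestep}), so Assumption~\ref{assp:timestep} bounds it by $2(\alpha V_n + \beta)$, while (\ref{eq:g_growth}) bounds $\|g\|^2$ by $\alpha V_n + \beta$; together these yield the recursion $\EE[V_{n+1}\mid\mathcal F_{\tn}] \le (1+3\alpha h_n)V_n + 3\beta h_n$. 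A discrete Gronwall argument (e.g.\ multiplying through by $e^{-3\alpha\tnp}$ and using $e^{-x}(1+x)\le 1$ to make $V_n e^{-3\alpha\tn}$, minus a predictable compensator bounded by $3\beta T$, a supermartingale) then controls $\EE[V_{n\wedge\sigma_R}]$ for the discrete stopping time $\sigma_R = \min\{n : \tn \ge T \text{ or } V_n \ge R^2\}$. To pass from grid values to the continuous supremum, I would apply Itô's formula to $\|\hX_t\|^2$, control the genuine martingale $2\int \hX_s^T g(\bX_s)\, dW_s$ via BDG, handle the intra-step drift displacement using $h_n\|f(\hX_{\tn})\|^2 \le 2(\alpha V_n + \beta)$ (again from (\ref{eq:timestep})), and absorb the BDG contribution into $\tfrac12\EE[\sup\|\hX\|^2]$ by Young's inequality before closing with Gronwall. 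Combined with Markov, this completes attainability, and monotone convergence as $R\to\infty$ strips the truncation. For general $p>0$, the range $0<p\le 2$ follows from Jensen, while for $p>2$ I would replay the one-step identity with $U_n = (1+V_n)^{p/2}$: the binomial expansion produces finitely many extra terms of the form $h_n^j\|f\|^{2j}$ and $h_n^{j-1}\|g\|^{2j}$, all absorbable by iterated use of (\ref{eq:timestep}) and Young's inequality into an analogous recursion $\EE[U_{n+1}\mid \mathcal F_{\tn}] \le (1+C_p h_n)U_n + C_p' h_n$.

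The main obstacle is the $h_n\|f(\hX_{\tn})\|^2$ term produced by squaring the explicit drift step; this is precisely the quantity that destabilises uniform-step Euler-Maruyama for super-linearly growing drift, and Assumption~\ref{assp:timestep} is engineered exactly to bound the combination $2\langle x, f(x)\rangle + h(x)\|f(x)\|^2$ linearly in $\|x\|^2$. A secondary subtlety is keeping the constants in the intra-step extension independent of $R$, which is what allows localisation to be removed cleanly at the end; for $p>2$ the bookkeeping of the binomial cross terms takes some care but introduces no new ideas.
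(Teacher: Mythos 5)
Your overall architecture (localisation, a conditional one-step moment recursion, BDG plus Gr\"onwall for the supremum, then removing the truncation by monotone convergence) is the same architecture the paper uses; the paper implements the truncation by projecting the iterate onto the ball of radius $K$ rather than by a stopping time, but that difference is cosmetic. There is, however, a concrete error in the step where you handle the intra-step displacement, and it hits precisely the term the adaptive timestep is designed to tame.

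You claim ``$h_n\|f(\hX_\tn)\|^2 \leq 2(\alpha V_n+\beta)$ (again from (\ref{eq:timestep}))''. This does \emph{not} follow from Assumption \ref{assp:timestep}. That assumption only bounds the combination $\langle x,f(x)\rangle + \halfs h(x)\|f(x)\|^2 \leq \alpha\|x\|^2+\beta$; isolating the second term gives only $h(x)\|f(x)\|^2 \leq 2\bigl(\alpha\|x\|^2+\beta-\langle x,f(x)\rangle\bigr)$, and $\langle x,f(x)\rangle$ has no lower bound under the hypotheses (indeed it is usually strongly negative for the SDEs of interest). For $f(x)=-x^3$ with $h(x)=x^{-2}$, one has $h\|f\|^2 = x^4$, which is not $O(\|x\|^2)$. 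So the intended pointwise bound for the It\^o drift term $\langle\hX_s,f(\bX_s)\rangle = \langle\bX_s,f(\bX_s)\rangle + (s-\sd)\|f(\bX_s)\|^2 + \langle g(\bX_s)(W_s-W_\sd), f(\bX_s)\rangle$ fails: the coefficient of $\|f\|^2$ here is $1$, not $\halfs$, and the two drift pieces cannot be split. The same defect would resurface in your proposed lift to $p>2$, where the binomial expansion again produces isolated powers $h_n^j\|f\|^{2j}$ that have no linear bound.

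The repair is to keep the two drift pieces coupled until the coefficient of $\|f\|^2$ falls to $\halfs$. Either square the one-step update directly (as the paper does), which gives
\[
\|\hX_\tnp\|^2 = \|\hX_\tn\|^2 + 2h_n\Bigl(\langle\hX_\tn,f(\hX_\tn)\rangle + \halfs h_n\|f(\hX_\tn)\|^2\Bigr) + \mbox{(martingale and It\^o-isometry terms)},
\]
so (\ref{eq:timestep}) applies verbatim; or, if you insist on the It\^o route, integrate the drift over $[\sd,s]$ before bounding, so that $\int_\sd^s\bigl(\langle\bX_u,f(\bX_u)\rangle + (u-\sd)\|f(\bX_u)\|^2\bigr)\,\D u = (s-\sd)\bigl(\langle\bX_s,f(\bX_s)\rangle + \halfs(s-\sd)\|f(\bX_s)\|^2\bigr) \leq (s-\sd)(\alpha\|\bX_s\|^2+\beta)$ by (\ref{eq:timestep}) and $(s-\sd)\leq h(\bX_s)$. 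Once this is fixed, your discrete supermartingale argument and the BDG/Young/Gr\"onwall continuous extension go through, and the paper's route via Jensen and direct treatment of $p\geq 4$ is one clean way to organise the lift to general $p$ without reintroducing isolated $h^j\|f\|^{2j}$ factors.
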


\begin{proof}
The proof is deferred to Section \ref{sec:proofs}.
\end{proof}

To bound the expected number of timesteps, we require an assumption 
on how quickly $h(x)$ can approach zero as $\|x\|\rightarrow\infty$.

\begin{assumption}[Timestep lower bound]
\label{assp:lower_bound}
There exist constants $\xi,\zeta,q\!>\!0$, such that the adaptive timestep 
function satisfies the inequality
\[
h(x) \geq \left( \xi \|x\|^q + \zeta \right)^{-1}.
\]
\end{assumption}

Given this assumption, we obtain the following lemma.

\begin{lemma}[Bounded timestep moments]

If the SDE satisfies Assumption \ref{assp:linear_growth}, and the 
timestep function $h$ satisfies Assumptions \ref{assp:timestep}
and \ref{assp:lower_bound}, then for all $p\!>\!0$
\[
\EE\left[ N_T^p \right] < \infty.
\]
where
$\displaystyle
N_T = \min\{n: t_n\geq T\}
$
is the number of timesteps required by a path approximation.
\end{lemma}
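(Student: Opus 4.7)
The plan is to dominate $N_T$ pathwise by a power of $\sup_{0\leq t\leq T}\|\hX_t\|$, and then invoke Theorem~\ref{thm:stability} to control the resulting moment. The lower bound on $h(x)$ from Assumption~\ref{assp:lower_bound} is precisely what converts the stability estimate on $\hX$ into a moment estimate on the number of timesteps.

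First I would use the defining property of $N_T$, namely $t_{N_T-1} < T$, to write
\[
\sum_{n=0}^{N_T-2} h_n \;=\; t_{N_T-1} \;<\; T,
\]
and bound the sum below by $(N_T - 1)$ times the smallest timestep involved to obtain
\[
(N_T - 1)\, \min_{0 \leq n \leq N_T - 2} h_n \;\leq\; T.
\]

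Second I would apply Assumption~\ref{assp:lower_bound} to each $h_n = h(\hX_\tn)$. Since $t_n < T$ for $n \leq N_T - 2$, the point $\hX_\tn$ is dominated in norm by $\sup_{0\leq t \leq T}\|\hX_t\|$, and the lower bound $h(x) \geq (\xi\|x\|^q + \zeta)^{-1}$ gives
\[
\min_{0 \leq n \leq N_T - 2} h_n \;\geq\; \Big(\xi \sup_{0\leq t\leq T}\|\hX_t\|^q + \zeta\Big)^{-1}.
\]
Combining this with the previous display yields the pathwise bound
\[
N_T \;\leq\; 1 + T\zeta + T\xi \sup_{0\leq t\leq T}\|\hX_t\|^q,
\]
which also covers the trivial edge case $N_T = 1$.

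Finally, I would take $p$-th moments and apply an elementary $(a+b+c)^p$ inequality (the $c_r$-inequality for $p \geq 1$, subadditivity of $x\mapsto x^p$ for $0<p<1$) to bound $\EE[N_T^p]$ by a constant depending on $p, T, \xi, \zeta$ times $1 + \EE[\sup_{0\leq t\leq T}\|\hX_t\|^{pq}]$, which is finite by Theorem~\ref{thm:stability} applied with exponent $pq$. I do not anticipate a real obstacle; the only point requiring care is to work with $t_{N_T-1}$ rather than $t_{N_T}$, so that the supremum in the last step remains over $[0,T]$, on which Theorem~\ref{thm:stability} supplies the required finite moment.
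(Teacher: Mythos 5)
Your proof is correct and follows essentially the same route as the paper: dominate $N_T$ pathwise by $1 + T(\xi\sup_{0\leq t\leq T}\|\hX_t\|^q + \zeta)$ via the lower bound in Assumption~\ref{assp:lower_bound}, then apply Theorem~\ref{thm:stability}. Your version is slightly more explicit about the indexing (working with $t_{N_T-1}<T$ and the $N_T-1$ terms in the sum), but the substance is identical to the paper's one-line derivation.
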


\begin{proof}
Assumption \ref{assp:lower_bound} gives us
\[
N_T 
\leq 1 + T \sup_{0\leq t\leq T} \frac{1}{h(\hX_t)}
\leq 1 + T \left( \xi \sup_{0\leq t\leq T} \|\hX_t\|^q + \zeta \right),
\]
and the result is then an immediate consequence of 
Theorem \ref{thm:stability}.
\end{proof}

\subsection{Strong convergence}

Standard strong convergence analysis for an approximation 
with a uniform timestep $h$ considers the limit $h\!\rightarrow\!0$.
This clearly needs to be modified when using an adaptive timestep,
and we will instead consider a timestep function $h^\delta(x)$ controlled
by a scalar parameter $0\!<\!\delta\!\leq\!1$, and consider the
limit $\delta\!\rightarrow\!0$.

Given a timestep function $h(x)$ which satisfies Assumptions 
\ref{assp:timestep} and \ref{assp:lower_bound}, ensuring stability as 
analysed in the previous section, there are two quite natural ways in 
which we might introduce $\delta$ to define $h^\delta(x)$:
\begin{eqnarray*}
&& h^\delta(x) = \delta\, \min(T, h(x) ), \\
&& h^\delta(x) = \min( \delta\,T, h(x) ).
\end{eqnarray*}
The first refines the timestep everywhere, while the latter 
concentrates the computational effort on reducing the 
maximum timestep, with $h(x)$ introduced to ensure stability
when $\|\hX_t\|$ is large.

In our analysis, we will cover both possibilities by making the 
following assumption.

\begin{assumption}
\label{assp:timestep_delta}

The timestep function $h^\delta$, satisfies the inequality
\begin{equation}
\label{eq:h_delta}
\delta\, \min(T, h(x)) \leq h^\delta(x) \leq \min( \delta\,T, h(x) ),
\end{equation}
and $h$ satisfies Assumption \ref{assp:timestep}.
\end{assumption}

Given this assumption, we obtain the following theorem:

\begin{theorem}[Strong convergence]

\label{thm:convergence}
If the SDE satisfies Assumption \ref{assp:linear_growth}, and the timestep 
function $h^\delta$ satisfies Assumption \ref{assp:timestep_delta},
then for all $p\!>\!0$
\[
\lim_{\delta\rightarrow 0}\EE\left[ \, \sup_{0\leq t\leq T} \|\hX_t\!-\!X_t\|^p \right] = 0.
\]
\end{theorem}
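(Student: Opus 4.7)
The plan is to adapt the localization argument of Higham, Mao \& Stuart \cite{hms02} to our adaptive timestep setting. Introduce stopping times
$\tau_R = \inf\{t\ge 0 : \|X_t\| \ge R\}$,
$\rho_R^\delta = \inf\{t\ge 0 : \|\hX_t\| \ge R\}$,
and $\theta_R^\delta = \tau_R \wedge \rho_R^\delta$, and decompose
\[
\EE\Big[\sup_{0\le t\le T}\|\hX_t - X_t\|^p\Big]
= \EE\Big[\sup_{t\le T}\|\hX_t - X_t\|^p \mathbf{1}_{\theta_R^\delta > T}\Big]
+ \EE\Big[\sup_{t\le T}\|\hX_t - X_t\|^p \mathbf{1}_{\theta_R^\delta \le T}\Big].
\]
The strategy is to make the second (tail) term small by choosing $R$ large, uniformly in $\delta$, and then the first (bulk) term small by sending $\delta\to 0$ with $R$ fixed.

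For the tail term, Hölder's inequality with conjugate exponents $q/p$ and $q/(q-p)$ for some $q>p$, followed by Markov's inequality, reduces the estimate to bounding $\EE[\sup\|\hX_t - X_t\|^q]$ and $\PP(\theta_R^\delta \le T) \le R^{-q}(\EE[\sup\|X_t\|^q] + \EE[\sup\|\hX_t\|^q])$. The crucial observation is that Lemma \ref{lemma:SDE_stability} and Theorem \ref{thm:stability} provide all three moments uniformly in $\delta$: since $h^\delta \le h$, Assumption \ref{assp:timestep_delta} guarantees that $h^\delta$ inherits Assumption \ref{assp:timestep} with the same constants $\alpha,\beta$ (as noted after Assumption \ref{assp:timestep}), and hence the constant $C_{p,T}$ in Theorem \ref{thm:stability} is $\delta$-independent. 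Consequently the tail term is $O(R^{-\gamma})$ for some $\gamma>0$, independently of $\delta$.

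On the event $\{\theta_R^\delta > T\}$, all three processes $X_t$, $\hX_t$, and $\bX_t$ remain in the ball of radius $R$ throughout $[0,T]$ (the last because $\bX$ takes values among the $\hX_{t_n}$), so $f$ and $g$ may be treated using the local Lipschitz constant $C_R$ from Assumption \ref{assp:linear_growth}. Setting $e_t = \hX_{t\wedge\theta_R^\delta} - X_{t\wedge\theta_R^\delta}$, subtracting the exact SDE from \eqref{eq:approx_SDE} and splitting
\[
f(\bX_t) - f(X_t) = \big(f(\bX_t) - f(\hX_t)\big) + \big(f(\hX_t) - f(X_t)\big),
\]
and similarly for $g$, I would apply Itô's formula to $\|e_t\|^2$, the Burkholder--Davis--Gundy inequality to pass to the $p$-th moment, and Gronwall's inequality to obtain
\[
\EE\Big[\sup_{0\le t\le T\wedge\theta_R^\delta}\|e_t\|^p\Big] \le C_{R,T,p}\,\EE\Big[\sup_{0\le t\le T\wedge\theta_R^\delta}\|\hX_t - \bX_t\|^p\Big].
\]

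The right-hand side is controlled via the continuous-interpolant identity $\hX_t - \bX_t = f(\bX_t)(t - \td) + g(\bX_t)(W_t - W_\td)$: on the localizing event, the continuous functions $\|f(\bX_t)\|$ and $\|g(\bX_t)\|$ are bounded in terms of $R$, and Assumption \ref{assp:timestep_delta} furnishes the deterministic cap $t - \td \le \delta T$, so this quantity is dominated by $C_R\,\delta T + C_R \sup\{\|W_s - W_u\| : s,u\in[0,T],\,|s-u|\le \delta T\}$. The latter tends to zero almost surely as $\delta \to 0$ by the uniform continuity of Brownian paths on $[0,T]$, and in $L^p$ by dominated convergence (the $\delta$-uniform moments from Theorem \ref{thm:stability} provide a dominator). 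Letting $\delta\to 0$ with $R$ fixed and then $R\to\infty$ completes the argument. The principal difficulty is that the grid points $t_n$ are themselves path-dependent random times, which would obstruct a naive modulus-of-continuity estimate for $W$; the uniform upper bound $h^\delta(x) \le \delta T$ built into Assumption \ref{assp:timestep_delta} is precisely what resolves this, reducing the adaptive problem to a deterministic question about increments of $W$ over intervals of length at most $\delta T$.
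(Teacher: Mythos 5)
Your proposal is correct and follows essentially the same route as the paper: both adapt the localization argument of Higham, Mao and Stuart (Theorem 2.2 of \cite{hms02}), using stopping times to split the error into a tail term, controlled uniformly in $\delta$ by the moment bounds of Lemma \ref{lemma:SDE_stability} and Theorem \ref{thm:stability} (with the correct observation that $h^\delta \le h$ makes the constant in Theorem \ref{thm:stability} $\delta$-independent), and a bulk term on $\{\theta_R^\delta > T\}$, controlled via the local Lipschitz constant $C_R$, It{\^o}'s formula, Burkholder--Davis--Gundy, Gr{\"o}nwall, and the interpolant identity. The one genuine point of departure is the treatment of $\hX_s - \bX_s$: the paper's sketch uses the conditional moment $\EE\bigl[\|W_s - W_\sd\|^2 \mid \mathcal{F}_\sd\bigr] = d\,(s-\sd)$, which is legitimate because $\sd$ is $\mathcal{F}_\sd$-measurable and $W_s - W_\sd$ is independent of $\mathcal{F}_\sd$, whereas you replace it with a pathwise modulus-of-continuity bound over intervals of length at most $\delta T$, combined with dominated convergence. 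Both are valid; your pathwise route is conceptually cleaner and correctly identifies the deterministic cap $h^\delta(x) \le \delta T$ from Assumption \ref{assp:timestep_delta} as the device that neutralizes the randomness of the grid, but it is inherently qualitative, whereas the conditional-moment computation is the one that scales to the quantitative rate in Theorem \ref{thm:convergence_order} (one small imprecision: the dominator for the modulus of continuity is $2\sup_{t\le T}\|W_t\|$, whose integrability is a standard Brownian-motion fact and does not require Theorem \ref{thm:stability}).
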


\begin{proof}
The proof is essentially identical to the uniform timestep 
Euler-Maruyama analysis in Theorem 2.2 in \cite{hms02} by
Higham, Mao \& Stuart.  

The only change required by the use of an adaptive timestep 
is to note that
\[
\hX_s - \bX_s = f(\bX_s) \, (s\!-\!\sd) + g(\bX_s) \, (W_s\!-\!W_\sd) 
\]
and $s\!-\!\sd < \delta \, T$ and 
$
\EE\left[\, \|W_s\!-\!W_\sd\|^2 \ | \ {\cal F}_\sd \,\right] = d \, (s\!-\!\sd).
$
\end{proof}

To prove an order of strong convergence requires new assumptions 
on $f$ and $g$:

\begin{assumption}[Lipschitz properties]

\label{assp:Lipschitz}

There exists a constant $\alpha\!>\!0$ such that for all $x,y\in\RR^m$,
$f$ satisfies the one-sided Lipschitz condition:
\begin{equation}
\langle x\!-\!y,f(x)\!-\!f(y)\rangle \leq \halfs\alpha\|x\!-\!y\|^2,
\label{eq:onesided_Lipschitz}
\end{equation}
and $g$ satisfies the Lipschitz condition:
\begin{equation}
\|g(x)\!-\!g(y)\|^2\leq \halfs\alpha \|x\!-\!y\|^2.
\label{eq:g_Lipschitz}
\end{equation} 
In addition, $f$ satisfies the locally polynomial growth Lipschitz condition
\begin{equation}
\|f(x)\!-\!f(y)\| \leq \left(\gamma\, (\|x\|^q \!+\! \|y\|^q) + \mu\right) \, \|x\!-\!y\|,
\label{eq:local_Lipschitz}
\end{equation}
for some $\gamma, \mu, q > 0$.
\end{assumption}

Note that setting $y\!=\!0$ gives
\[
\langle x, f(x) \rangle 
\ \leq\ \halfs \alpha \|x\|^2 + \langle x, f(0) \rangle
\ \leq\        \alpha \|x\|^2 + \halfs \alpha^{-1} \|f(0)\|^2,
\]
\[
\| g(x) \|^2 
\ \leq\  2 \|g(x)\!-\!g(0)\|^2 + 2 \|g(0)\|^2
\ \leq\ \alpha \|x\|^2 +  2 \|g(0)\|^2.
\]
Hence, Assumption \ref{assp:Lipschitz} implies Assumption 
\ref{assp:linear_growth}, with the same $\alpha$ and 
an appropriate $\beta$.

Also, if the drift and volatility are differentiable, 
the following assumption is equivalent to Assumption 
\ref{assp:Lipschitz}, and usually easier to check in practice.

\begin{assumption}[Lipschitz properties]

\label{assp:Lipschitz_diff}

There exists a constant $\alpha\!>\!0$ such that for 
all $x,e\in\RR^m$ with $\|e\|\!=\!1$,
$f$ satisfies the one-sided Lipschitz condition:
\begin{equation}
\langle e ,\nabla f(x)\, e \rangle \leq \halfs\alpha
\label{eq:onesided_Lipschitz_diff}
\end{equation}
and $g$ satisfies the Lipschitz condition:
\begin{equation}
\| \nabla g(x)\|^2\leq \halfs\alpha
\label{eq:g_Lipschitz_diff}
\end{equation} 
and in addition $f$ satisfies the locally polynomial growth 
Lipschitz condition
\begin{equation}
\|\nabla f(x) \| \leq \left( 2\,\gamma\, \|x\|^q + \mu\right),
\label{eq:local_Lipschitz_diff}
\end{equation}
for some $\gamma, \mu, q > 0$.
\end{assumption}

\begin{theorem}[Strong convergence order]

\label{thm:convergence_order}

If the SDE satisfies Assumption \ref{assp:Lipschitz}, and the timestep 
function $h^\delta$ satisfies Assumption \ref{assp:timestep_delta},
then for all $p\!>\!0$ there exists a constant $C_{p,T}$ such that
\[
\EE\left[ \, \sup_{0\leq t\leq T} \|\hX_t\!-\!X_t\|^p \right] \leq C_{p,T} \, \delta^{p/2}.
\]
\end{theorem}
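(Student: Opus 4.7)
The plan is to mimic the classical order-one-half strong convergence proof for Euler--Maruyama, with two modifications: (i) split every $f(\bX_t)-f(X_t)$ increment through the pointwise value $f(\hX_t)$ so that the one-sided Lipschitz hypothesis (\ref{eq:onesided_Lipschitz}) can be exploited directly, and (ii) absorb the polynomial factor produced by (\ref{eq:local_Lipschitz}) using the uniform moment bounds already provided by Theorem \ref{thm:stability} for $\hX$ and Lemma \ref{lemma:SDE_stability} for $X$. Throughout, let $e_t\triangleq \hX_t-X_t$, so that from (\ref{eq:approx_SDE}) and (\ref{SDE}),
\[
\D e_t = \bigl(f(\bX_t)-f(X_t)\bigr)\D t + \bigl(g(\bX_t)-g(X_t)\bigr)\D W_t.
\]

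First I would apply It\^o's formula to $\|e_t\|^p$ (for even $p\geq 2$; the case $p<2$ then follows by Jensen). The drift contribution is controlled by the decomposition
\[
\langle e_t, f(\bX_t)-f(X_t)\rangle
= \langle e_t, f(\hX_t)-f(X_t)\rangle + \langle e_t, f(\bX_t)-f(\hX_t)\rangle,
\]
where the first term is bounded by $\halfs\alpha\|e_t\|^2$ via (\ref{eq:onesided_Lipschitz}), and the second is split by Young's inequality into a $\|e_t\|^2$ piece and a $\|f(\bX_t)-f(\hX_t)\|^2$ remainder. The quadratic variation $\|g(\bX_t)-g(X_t)\|^2$ is handled by (\ref{eq:g_Lipschitz}) applied to the pair $(\hX_t,X_t)$ together with a Young splitting of the $(\bX_t,\hX_t)$ piece. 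Standard BDG plus Young on the martingale term, followed by Gr\"onwall, would then give
\[
\EE\bigl[\sup_{0\leq s\leq t}\|e_s\|^p\bigr]
\leq C_{p,T}\int_0^t
\EE\bigl[\|f(\bX_s)-f(\hX_s)\|^p + \|\bX_s-\hX_s\|^p\bigr]\,\D s.
\]

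The task then reduces to showing both one-step quantities are $O(\delta^{p/2})$. Since $\hX_s-\bX_s = f(\bX_s)(s-\sd)+g(\bX_s)(W_s-W_\sd)$ with $s-\sd\le h^\delta(\bX_s)\le \delta T$, the linear growth of $g$ and the polynomial growth implicit in (\ref{eq:local_Lipschitz}) for $f$, combined with the moment bound $\EE[\sup_{t\le T}\|\hX_t\|^r]<\infty$ for every $r>0$ from Theorem \ref{thm:stability}, immediately yield $\EE[\|\bX_s-\hX_s\|^p]\leq C\,\delta^{p/2}$ via the moments of a Brownian increment of size at most $\delta T$. For the drift-difference term, (\ref{eq:local_Lipschitz}) gives
\[
\|f(\bX_s)-f(\hX_s)\|\leq \bigl(\gamma(\|\bX_s\|^q+\|\hX_s\|^q)+\mu\bigr)\|\bX_s-\hX_s\|,
\]
and H\"older's inequality (with a large conjugate exponent applied to the polynomial factor and a small one applied to the $\|\bX_s-\hX_s\|$ factor) combines the arbitrary moment bound on $\sup\|\hX_s\|$ with the $\delta^{p/2}$ bound just established to produce $\EE[\|f(\bX_s)-f(\hX_s)\|^p]\leq C_{p,T}\,\delta^{p/2}$ as well.

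The main obstacle I anticipate is the polynomial factor in (\ref{eq:local_Lipschitz}): unlike the globally Lipschitz setting it forces one to pay with high moments of $\|\hX_s\|$ and $\|\bX_s\|$, and the analysis really depends on having uniformly-in-$\delta$ control of $\EE[\sup_{t\le T}\|\hX_t\|^r]$ for arbitrarily large $r$. This is precisely the content of Theorem \ref{thm:stability}, and the key observation is that the constant there depends only on $\alpha,\beta,r,T$ and not on $\delta$, so the H\"older argument closes uniformly and the final Gr\"onwall step produces a $\delta$-independent constant $C_{p,T}$ in front of $\delta^{p/2}$.
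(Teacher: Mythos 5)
Your proposal is correct and follows essentially the same route as the paper: the same drift decomposition through $f(\hX_t)$, the one-sided Lipschitz condition for the first piece, the locally polynomial Lipschitz condition plus the uniform-in-$\delta$ moment bounds from Theorem~\ref{thm:stability} (and Lemma~\ref{lemma:SDE_stability}) via H\"older to control the second, BDG for the martingale, reduction to the one-step quantity $\hX_s-\bX_s$ being $O(\delta^{1/2})$ in all $L^p$, and a final Gr\"onwall. The only cosmetic differences are that you apply It\^o directly to $\|e_t\|^p$ rather than to $\|e_t\|^2$ followed by Jensen, and you peel off $\|f(\bX_s)-f(\hX_s)\|^p$ as a separate term before applying H\"older whereas the paper keeps the coupled factor $L(\hX_s,\bX_s)^2\|\hX_s-\bX_s\|^2$ and applies H\"older to that product; neither change affects the substance.
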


\begin{proof}
The proof is deferred to Section \ref{sec:proofs}.
\end{proof}

\begin{lemma}[Number of timesteps]

\label{lemma:timesteps2}

If the SDE satisfies Assumption \ref{assp:Lipschitz}, and the timestep 
function $h^\delta(x)$ satisfies  Assumption \ref{assp:timestep_delta},
with $h(x)$ satisfying Assumption \ref{assp:lower_bound}, 
then for all $p\!>\!0$ there exists a constant $c_{p,T}$ such that
\[
\EE\left[ N_T^p \right] \leq c_{p,T} \, \delta^{-p}.
\]
where $N_T$ is again the number of timesteps required by a path approximation.
\end{lemma}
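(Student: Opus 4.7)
The plan is to adapt the proof of the earlier Bounded timestep moments lemma to the adaptive function $h^\delta$. The only new ingredient is tracking how the lower bound on $h^\delta$ depends on $\delta$; this is where the $\delta^{-p}$ scaling will come from.

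First I would repeat the first step of the earlier lemma, namely
\[
N_T \leq 1 + T \sup_{0\le t\le T}\frac{1}{h^\delta(\hX_t)}.
\]
The lower bound $h^\delta(x) \geq \delta \min(T, h(x))$ from Assumption \ref{assp:timestep_delta} gives
\[
\frac{1}{h^\delta(x)} \leq \frac{1}{\delta}\max\!\left(\frac{1}{T},\, \frac{1}{h(x)}\right) \leq \frac{1}{\delta}\left(\frac{1}{T} + \frac{1}{h(x)}\right),
\]
and Assumption \ref{assp:lower_bound} then supplies $1/h(x) \leq \xi\|x\|^q + \zeta$. Combining these pointwise estimates and taking the supremum over $t \in [0,T]$ yields
\[
N_T \leq 1 + \frac{T}{\delta}\left(\frac{1}{T} + \zeta + \xi \sup_{0\le t\le T}\|\hX_t\|^q\right),
\]
so $N_T^p \leq C_p\, \delta^{-p} \bigl(1 + \sup_{0\le t\le T}\|\hX_t\|^{pq}\bigr)$ after applying the elementary inequality $(a+b)^p \leq 2^{p-1}(a^p + b^p)$.

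It remains to take expectations and bound $\EE\!\left[\sup_{0\le t\le T} \|\hX_t\|^{pq}\right]$. Here I would invoke Theorem \ref{thm:stability}, which is applicable because Assumption \ref{assp:Lipschitz} implies Assumption \ref{assp:linear_growth} (as observed just after its statement), and because the remark following Assumption \ref{assp:timestep} ensures that $h^\delta$ inherits Assumption \ref{assp:timestep} from $h$ (since $h^\delta \leq h$). Theorem \ref{thm:stability} then yields a finite bound $C_{pq,T}$ on this quantity, and the claim follows with $c_{p,T}$ absorbing $C_p$, $C_{pq,T}$, $\xi$, $\zeta$ and $T$. The argument is essentially bookkeeping; the only real content is the max/min manipulation that produces the expected $\delta^{-1}$ scaling, so there is no substantial obstacle.
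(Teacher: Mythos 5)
Your proposal is correct and follows essentially the same route as the paper's own proof: both start from $N_T \leq 1 + T\sup 1/h^\delta(\hX_t)$, use the lower bound in Assumption~\ref{assp:timestep_delta} together with Assumption~\ref{assp:lower_bound} to extract the $\delta^{-1}$ factor and the polynomial bound in $\|\hX_t\|$, and then invoke Theorem~\ref{thm:stability} (noting $h^\delta \leq h$ so stability applies). The only cosmetic difference is that the paper keeps the $\max(h^{-1}(x),T^{-1})$ form one step longer before expanding, whereas you pass immediately to the sum bound; the content is identical.
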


\begin{proof}
Equation (\ref{eq:h_delta}) and Assumption \ref{assp:lower_bound} give
\begin{eqnarray*}
N_T &\leq& 1 + T \sup_{0\leq t\leq T} \frac{1}{h^\delta(\hX_t)}
\\  &\leq& 1 + \delta^{-1}\, T \sup_{0\leq t\leq T} \max(h^{-1}(x),T^{-1})
\\  &\leq& \delta^{-1}\, T \left( \xi \sup_{0\leq t\leq T} \|\hX_t\|^q
 + \zeta + (1\!+\!\delta) \, T^{-1} \right).
\end{eqnarray*}
The result is then a consequence of Theorem \ref{thm:stability} 
since $h^\delta(x)\!\leq\!h(x)$ and therefore $h^\delta(x)$ satisfies 
the requirements for stability.
\end{proof}

The conclusion from Theorem \ref{thm:convergence_order} and 
Lemma \ref{lemma:timesteps2} is that 
\[
\EE\left[ \, \sup_{0\leq t\leq T} \|\hX_t\!-\!X_t\|^p \right]^{1/p} \leq 
C^{1/p}_{p,T}\, c^{1/2}_{1,T} \, (\EE\left[ N_T \right])^{-1/2}
\]
which corresponds to order $\halfs$ strong convergence when
comparing the accuracy to the expected cost.

First order strong convergence is achievable for Langevin SDEs
in which $m\!=\!d$ and $g$ is the identity matrix $I_m$, but 
this requires stronger assumptions on the drift $f$.

\begin{assumption}[Enhanced Lipschitz properties]

\label{assp:enhanced_Lipschitz}

There exists a constant $\alpha\!>\!0$ such that for all $x,y\in\RR^m$,
$f$ satisfies the one-sided Lipschitz condition:
\begin{equation}
\langle x\!-\!y,f(x)\!-\!f(y)\rangle \leq \halfs\alpha\|x\!-\!y\|^2.
\label{eq:onesided_Lipschitz2}
\end{equation}
In addition, $f$ is differentiable, and $f$ and $\nabla f$ 
satisfy the locally polynomial growth Lipschitz condition
\begin{equation}
\|f(x)\!-\!f(y)\| + \|\nabla f(x)\!-\!\nabla f(y)\|\leq 
\left( \gamma\, (\|x\|^q \!+\! \|y\|^q) + \mu\right) \|x\!-\!y\|,
\label{eq:local_Lipschitz2}
\end{equation}
for some $\gamma, \mu, q > 0$.
\end{assumption}

\begin{lemma}
\label{lemma:useful}
If $f$ satisfies Assumption \ref{assp:enhanced_Lipschitz}
then for any $x,y,v \in \RR^m$,\\[-0.1in]
\[
\langle v, f(x)\!-\!f(y) \rangle = 
\langle v, \nabla f(x) (x\!-\!y) \rangle + R(x,y,v),
\]
where the remainder term has the bound
\[
|R(x,y,v)| \leq \left( \gamma\, (\|x\|^q \!+\! \|y\|^q) + \mu\right) \|v\| \, \|x\!-\!y\|^2.
\]
\end{lemma}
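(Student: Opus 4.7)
The plan is to write $f(x)-f(y)$ as a first-order Taylor expansion around $x$ using the integral form of the remainder, and then bound the remainder using the local Lipschitz condition on $\nabla f$ supplied by Assumption \ref{assp:enhanced_Lipschitz}.

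Concretely, I would first apply the fundamental theorem of calculus along the line segment joining $x$ and $y$: since $f$ is differentiable,
\[
f(x) - f(y) = -\int_0^1 \nabla f\bigl(x + t(y-x)\bigr)\,(y-x)\,\D t
            = \int_0^1 \nabla f\bigl(x + t(y-x)\bigr)\,(x-y)\,\D t.
\]
Subtracting the base term $\nabla f(x)(x-y)$ and pairing against $v$ yields the identity $\langle v, f(x)-f(y)\rangle = \langle v, \nabla f(x)(x-y)\rangle + R(x,y,v)$ with
\[
R(x,y,v) = \int_0^1 \bigl\langle v, \bigl[\nabla f(x + t(y-x)) - \nabla f(x)\bigr](x-y)\bigr\rangle\,\D t.
\]

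Next I would bound $|R|$ by Cauchy--Schwarz and the operator norm:
\[
|R(x,y,v)| \leq \|v\|\, \|x-y\| \int_0^1 \bigl\|\nabla f(x + t(y-x)) - \nabla f(x)\bigr\|\,\D t.
\]
By the locally polynomial growth Lipschitz condition (\ref{eq:local_Lipschitz2}) applied to $\nabla f$ at the pair $(x+t(y-x),\,x)$, the integrand is at most $\bigl(\gamma\,(\|x+t(y-x)\|^q + \|x\|^q) + \mu\bigr)\,t\,\|x-y\|$. Since $\|x+t(y-x)\| \leq \|x\| + \|y\|$, a routine convexity (or $c_r$) inequality gives $\|x+t(y-x)\|^q \leq C_q(\|x\|^q + \|y\|^q)$ for a constant $C_q$ depending only on $q$, so the parenthetical factor is bounded by $\gamma'(\|x\|^q + \|y\|^q) + \mu$ for a suitably enlarged $\gamma'$. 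Evaluating $\int_0^1 t\,\D t = \halfs$ then absorbs the factor $\halfs$ into the constant, producing the bound
\[
|R(x,y,v)| \leq \bigl(\gamma\,(\|x\|^q + \|y\|^q) + \mu\bigr) \, \|v\| \, \|x-y\|^2
\]
after relabelling the constants $\gamma$ and $\mu$ (which is harmless, since Assumption \ref{assp:enhanced_Lipschitz} leaves them as unspecified positive parameters).

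There is essentially no hard step: the argument is Taylor's theorem with integral remainder combined with the hypothesis that $\nabla f$ itself has polynomially controlled Lipschitz constant. The only minor care required is in verifying that the polynomial growth factor evaluated on the segment $x+t(y-x)$ can be re-expressed in terms of $\|x\|^q$ and $\|y\|^q$ alone, which is handled by the elementary inequality $(\|x\|+\|y\|)^q \leq 2^{(q-1)_+}(\|x\|^q + \|y\|^q)$ and a corresponding adjustment of $\gamma$.
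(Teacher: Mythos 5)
Your proof is correct and follows essentially the same route as the paper: expand $f(x)-f(y)$ along the segment joining $x$ and $y$ and bound the remainder using the Lipschitz condition (\ref{eq:local_Lipschitz2}) on $\nabla f$. The paper applies the Mean Value Theorem to the scalar function $\lambda \mapsto \langle v, f(y+\lambda(x-y))\rangle$ to obtain a single intermediate point, whereas you use the integral form of the fundamental theorem of calculus, but the mechanism and the key estimate are the same (and in fact the convexity bound $\|x+t(y-x)\| \leq \max(\|x\|,\|y\|)$ together with $\int_0^1 t\,\D t = \halfs$ would let you avoid even the constant relabelling).
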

\begin{proof}
If we define the scalar function $u(\lambda)$ for $0\!\leq\!\lambda\!\leq\!1$ by
\[
u(\lambda) = \langle v, f(y+\lambda(x\!-\!y)) \rangle,
\]
then $u(\lambda)$ is continuously differentiable, and by the Mean Value Theorem
$u(1)\!-\!u(0)= u'(\lambda^*)$ for some $0\!<\!\lambda^*\!<\!1$, which implies 
that
\[
\langle v, f(x)\!-\!f(y) \rangle = \langle v, \nabla f(y+\lambda^*(x\!-\!y))\, (x\!-\!y) \rangle.
\]
The final result then follows from the Lipschitz property of $\nabla f$.
\end{proof}

We now state the theorem on improved strong convergence.

\begin{theorem}[Strong convergence for Langevin SDEs]

\label{thm:convergence_order2}

If $m\!=\!d$, $g\equiv I_m$, $f$ satisfies 
Assumption \ref{assp:enhanced_Lipschitz}, 
and the timestep function $h^\delta$ satisfies Assumption 
\ref{assp:timestep_delta}, then for all $T,p\in(0,\infty)$ 
there exists a constant $C_{p,T}$ such that
\[
\EE\left[ \, \sup_{0\leq t\leq T} \|\hX_t\!-\!X_t\|^p \right] \leq C_{p,T} \, \delta^p.
\]
\end{theorem}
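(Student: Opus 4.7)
The plan is to adapt the classical argument giving strong order~$1$ for Euler--Maruyama with additive noise, accommodating the non-globally Lipschitz drift via the one-sided Lipschitz bound (\ref{eq:onesided_Lipschitz2}) and the Taylor-type identity of Lemma~\ref{lemma:useful}. The essential simplification from $g\equiv I_m$ is that the stochastic integrals cancel in the error, so
\[
e_t := \hX_t - X_t = \int_0^t\bigl(f(\bX_s)-f(X_s)\bigr)\,\D s
\]
is pathwise absolutely continuous with $\frac{\D}{\D t}\|e_t\|^2 = 2\langle e_t,f(\bX_t)-f(X_t)\rangle$. I will target $\EE\bigl[\sup_{0\leq t\leq T}\|e_t\|^{2p}\bigr]\leq C_{p,T}\,\delta^{2p}$ for integer $p\geq 1$; the general case $p>0$ then follows by Jensen's inequality.

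The first main step is a three-term split via Lemma~\ref{lemma:useful} with $x=\bX_s$, $y=\hX_s$, $v=e_s$:
\[
\langle e_s,f(\bX_s)-f(X_s)\rangle = \langle e_s,f(\hX_s)-f(X_s)\rangle + \langle e_s,\nabla f(\bX_s)(\bX_s-\hX_s)\rangle + R_s.
\]
The first summand is bounded by (\ref{eq:onesided_Lipschitz2}) as $\leq\halfs\alpha\|e_s\|^2$. For the remainder I use $|R_s|\leq(\gamma(\|\bX_s\|^q+\|\hX_s\|^q)+\mu)\|e_s\|\,\|\bX_s-\hX_s\|^2$ together with the identity $\bX_s-\hX_s=-f(\bX_s)(s-\sd)-(W_s-W_\sd)$ and the bound $s-\sd\leq\delta T$; combining Theorem~\ref{thm:stability} and Lemma~\ref{lemma:SDE_stability} with Cauchy--Schwarz and BDG on the Brownian increments yields an $L^p$ contribution of order $\delta^p$. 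The middle term splits further into a deterministic piece $-(s-\sd)\langle e_s,\nabla f(\bX_s)f(\bX_s)\rangle$, again $O(\delta^p)$ in $L^p$ by Cauchy--Schwarz and stability, plus the delicate stochastic piece $-\langle e_s,\nabla f(\bX_s)(W_s-W_\sd)\rangle$.

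The main obstacle is to bound the time integral $M_t := -2\int_0^t\langle e_s,\nabla f(\bX_s)(W_s-W_\sd)\rangle\,\D s$ in $L^p$ by $C_{p,T}\,\delta^p\,\EE\bigl[\sup_{s\leq T}\|e_s\|^{2p}\bigr]^{1/2}$. On each step $[\tn,\tnp]$ the coefficient $\nabla f(\hX_\tn)$ is $\mathcal{F}_\tn$-measurable, so substituting $e_s = e_\tn + \int_\tn^s(f(\hX_\tn)-f(X_r))\,\D r$ decomposes $M_t$ into a discrete martingale $\sum_n\bigl\langle e_\tn,\nabla f(\hX_\tn)\int_\tn^\tnp(W_s-W_\tn)\,\D s\bigr\rangle$, whose $n$-th summand is $\mathcal{F}_\tn$-conditionally centred with conditional second moment $O(h_n^3\|e_\tn\|^2\|\nabla f(\hX_\tn)\|^2)$, plus a doubly integrated remainder of strictly higher order. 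Discrete BDG applied to this martingale, combined with $h_n\leq\delta T$ and $\sum_n h_n\leq T+\delta T$, yields the required $L^p$ bound; the random $t_n$ are stopping times, so the martingale structure is preserved. Finally I assemble everything into
\[
\sup_{0\leq t\leq T'}\|e_t\|^{2p} \leq C_p\int_0^{T'}\|e_s\|^{2p}\,\D s + B_{T'} + \sup_{0\leq t\leq T'}|M_t|^p
\]
with $\EE[B_{T'}]=O(\delta^{2p})$, apply Young's inequality to the $M$-bound to absorb the resulting $\EE[\sup\|e\|^{2p}]$ term into the left-hand side, and close with Grönwall's lemma. The hardest bit is the careful bookkeeping in the previous step: verifying that every cross-term arising from the expansion of $e_s$ within a single timestep carries at least one extra factor of $h_n$ beyond the local stochastic variance, so that the polynomial growth of $f$ and $\nabla f$ is absorbed by the moment bounds of Theorem~\ref{thm:stability}.
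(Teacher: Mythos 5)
Your plan is essentially the paper's proof: both exploit $g\equiv I_m$ to eliminate the stochastic integral from $e_t$, apply Lemma~\ref{lemma:useful} to split off the one-sided-Lipschitz contribution and a polynomially-bounded remainder $R_s$, and then reduce the critical term $\langle e_s,\nabla f(\bX_s)(W_s-W_\sd)\rangle$ to a step-wise martingale by replacing $e_s$ with $e_\sd$. The only real variation is technical: you propose to apply a discrete BDG inequality directly to $\sum_n\bigl\langle e_\tn,\nabla f(\hX_\tn)\int_\tn^\tnp(W_s-W_\tn)\,\D s\bigr\rangle$, whereas the paper integrates by parts within each step to produce the It{\^o} integral $\int_0^s(\uo-u)\langle e_\ud,\nabla f(\bX_u)\,\D W_u\rangle$ and applies continuous BDG; the two are interchangeable, but the paper's route makes the boundary term from the partial last timestep explicit (its $I_{42}$), which your write-up must remember to estimate separately (via Young and H\"older) since it is not an increment of your discrete martingale when taking $\sup_{t\leq T}$. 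For the $e_s-e_\sd$ remainder you call ``strictly higher order,'' the cleanest justification is the paper's: invoke the already-proved order-$\halfs$ result of Theorem~\ref{thm:convergence_order} to get $\EE[\|e_s-e_\sd\|^p]=O(\delta^{3p/2})$, rather than re-deriving it and absorbing terms by Young; otherwise the bookkeeping is exactly what you anticipate.
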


\begin{proof}
The proof is deferred to Section \ref{sec:proofs}.
\end{proof}

Comment: first order strong convergence can also be achieved for a 
general $g(x)$ by using an adaptive timestep Milstein discretisation, 
provided $\nabla g$ satisfies an additional Lipschitz condition. 
A formal statement and proof of this is omitted as it requires a 
lengthy extension to the stability analysis.  In addition, this 
numerical approach is only practical in cases in which the 
commutativity condition is satisfied and therefore there is no 
need to simulate the L{\'evy} areas which the Milstein method 
otherwise requires \cite{kp92}.


\section{Examples and numerical results}

In this section we discuss a number of example SDEs with 
non-globally Lipschitz drift.
In each case we comment on the applicability of the theory 
and a suitable choice for the adaptive timestep.  

We then present numerical results for three testcases which 
illustrate some key aspects.

\subsection{Scalar SDEs}

In each of the cases to be presented, the drift is of the form
\begin{equation}
\label{eq:scalar_drift}
f(x) \approx - \, c \, \mbox{sign}(x) \, |x|^q, ~~~ \mbox{as} ~ |x|\rightarrow \infty
\end{equation}
for some constants $c\!>\!0$, $q\!>\!1$.  Therefore, as
$|x|\!\rightarrow\! \infty$, the maximum stable timestep satisfying 
Assumption \ref{assp:timestep} corresponds to
$\langle x, f(x) \rangle + \halfs h(x) \, |f(x)|^2 \approx 0$
and hence $h(x) \approx 2 |x|/|f(x)| \approx 2\, c^{-1} |x|^{1-q}$.  
A suitable choice for $h(x)$ and $h^\delta(x)$ is therefore
\begin{equation}
h(x) = \min\left(T, c^{-1} |x|^{1-q}\right), ~~~
h^\delta(x) = \delta\, h(x).
\label{eq:scalar_timestep}
\end{equation}

\subsubsection{Stochastic Ginzburg-Landau equation}

This describes a phase transition from the theory of superconductivity
\cite{hjk11,kp92}.
\[
\D X_t= \left( (\eta+\halfs\sigma^2) X_t - \lambda X_t^3 \right)\D t
      + \sigma X_t\,\D W_t,
\]
where $\eta\!\geq\!0$, $\lambda,\sigma\!>\!0$. The SDE is usually 
defined on the domain $\RR^+$, since $X_t\!>\!0$ for all $t\!>\!0$, 
if $X_0\!>\!0$.  However, the numerical approximation is not guaranteed
to remain strictly positive and the domain can be extended to $\RR$ 
without any change to the SDE.

The drift and volatility satisfy Assumptions \ref{assp:linear_growth} 
and \ref{assp:Lipschitz}, and therefore all of the theory is applicable,
with a suitable choice for $h^\delta(x)$, based on (\ref{eq:scalar_drift}) 
and (\ref{eq:scalar_timestep}), being
\[
h^\delta(x) = \delta \, \min\left(T, \lambda^{-1} x^{-2}\right).
\]

\subsubsection{Stochastic Verhulst equation}

This is a model for a population with competition between 
individuals \cite{hjk11}.
\[
\D X_t=\left(\left( \eta+\halfs\sigma^2\right)X_t-\lambda X_t^2 \right) \D t 
+ \sigma X_t \, \D W_t,
\]
where $\eta,\lambda,\sigma\!>\!0.$  The SDE is defined on the domain $\RR^+$,
but can be extended to $\RR$ by modifying it to
\[
\D X_t=\left(\left( \eta+\halfs\sigma^2\right)X_t-\lambda\, |X_t| X_t \right) \D t 
+ \sigma X_t \, \D W_t,
\]
so that the drift is positive in the limit $x\!\rightarrow\!-\infty$.

The drift and volatility then satisfy Assumptions \ref{assp:linear_growth} 
and \ref{assp:Lipschitz}, and therefore all of the theory is applicable,
with a suitable choice for $h^\delta(x)$, based on (\ref{eq:scalar_drift}) 
and (\ref{eq:scalar_timestep}), being
\[
h^\delta(x) = \delta \, \min\left(T, \lambda^{-1} |x|^{-1}\right).
\]

\if 0

\subsubsection{Feller diffusion with logistic growth}

The branching process with logistic growth is a stochastic 
Verhulst equation with Feller noise \cite{hjk11}.
\[
\D X_t = \lambda\, X_t(K\!-\!X_t)\,\D t + \sigma\sqrt{X_t}\,\D W_t,
\]
where $\lambda,K,\sigma\!>\!0$. The SDE is defined on the domain 
$\RR^+$, but can be extended to $\RR$ by modifying it to
\[
\D X_t = \lambda\, X_t(K\!-\!|X_t|)\,\D t + \sigma\sqrt{|X_t|}\,\D W_t.
\]
The drift and volatility satisfy Assumption \ref{assp:linear_growth}
and so the numerical approximation will be stable if the maximum timestep 
is defined to be
\[
h(x) = \min\left(T, \lambda^{-1} |x|^{-1}\right).
\]
However, the volatility does not satisfy the global Lipschitz condition 
and therefore the strong convergence theory is not applicable.

\fi

\if 0

\subsubsection{Wright Fisher diffusion}

The Wright-Fisher family of diffusion processes is a class of 
evolutionary models widely used in population genetics, with 
applications also in finance and Bayesian statistics.
\[
\D X_t = (a\!-\!bX_t)\,\D t +
\gamma\sqrt{X_t(1\!-\!X_t)}\,\D W_t,\ \ X_0=x_0\in(0,1)
\]
where $a,b,\gamma\!>\!0$ and 
$2a/\gamma^2\geq 1, 2(b\!-\!a)/\gamma^2\geq 1$,
so that the SDE has unique strong solutions with 
$\PP[X_t\!\in\!(0,1)]=1$ for $t\!>\!0$.
The transformation
\[
Y_t = \log X_t - \log (1\!-\!X_t),
\]
leads to the SDE
\begin{eqnarray*}
\D Y_t &=& \left( 2a-b + (a-b+\halfs\gamma^2)\exp(Y_t) 
                     + (a-\halfs\gamma^2)\exp(-Y_t) \right) \D t 
\\&& +\ \gamma\left(\exp(\halfs Y_t) + \exp(-\halfs Y_t) \right) \D W_t,
\end{eqnarray*}
defined on the whole real line $\RR$. 
The drift and volatility are both locally Lipschitz, but 
neither satisfies the linear growth condition, so none 
of the theory is applicable.

\fi

\subsection{Multi-dimensional SDEs}

With multi-dimensional SDEs there are two cases of particular 
interest. For SDEs with a drift which, 
for some $\beta\!>\!0$ and sufficiently large $\|x\|$,
satisfies the condition
\[
\langle x, f(x) \rangle \leq - \beta\, \|x\| \, \|f(x)\|,
\]
one can take 
$\langle x, f(x) \rangle + \halfs h(x) \, |f(x)|^2 \approx 0$
and therefore a suitable definition of $h(x)$ for large $\|x\|$ 
is
\[
h(x) = \min(T, \|x\|/\|f(x)\|).
\]
For SDEs with a drift which does not satisfy the condition,
but for which $\|f(x)\|\rightarrow \infty$ as $\|x\|\rightarrow \infty$,
an alternative choice for large $\|x\|$ is to use
\begin{equation}
h(x) = \min(T, \gamma\, \|x\|^2/\|f(x)\|^2),
\label{eq:multi_h}
\end{equation}
for some $\gamma\!>\!0$.  The difficulty in this case is 
choosing the best value for $\gamma$, taking into account 
both accuracy and cost.

\subsubsection{Stochastic van der Pol oscillator}

This describes state oscillation \cite{hj15}.
\begin{eqnarray*}
\D X_t^{(1)} &=& X_t^{(2)}\,\D t\\
\D X_t^{(2)} &=& 
\left(\alpha\left(\mu\!-\!(X_t^{(1)})^2 \right) X_t^{(2)}-\delta X_t^{(1)} \right)
\,\D t + \beta \,\D W_t 
\end{eqnarray*}
where $\alpha, \mu, \delta, \beta\!>\!0$.
It can be put in the standard form by defining
\[
f(x) \equiv 
\begin{pmatrix}
x_2\\
\alpha\,(\mu\!-\!x_1^2)\,x_2-\delta x_1
\end{pmatrix},\ \ 
g(x) \equiv 
\begin{pmatrix}
0\\
\beta
\end{pmatrix}.
\]
It follows that
\[
\left\langle x,f(x) \right\rangle
\ =\ - \alpha\, x_1^2\, x_2^2 + \alpha\,\mu\, x_2^2 + (1\!-\!\delta)\, x_1\, x_2
\ \leq\ \left( \alpha \mu + \halfs (1\!-\!\delta) \right) \|x\|^2.
\]
Therefore the drift and volatility satisfy Assumption 
\ref{assp:linear_growth} and the numerical approximations 
will be stable if the maximum timestep is defined 
by (\ref{eq:multi_h}).

However, it can be verified that $\langle e, \nabla f(x) \, e\rangle$
is not uniformly bounded for an arbitrary $e$ such that $\|e\|=1$, and 
therefore the drift does not satisfy the one-sided Lipschitz condition.  
Hence the stability and strong convergence theory in this paper is 
applicable, but not the theorems on the order of convergence.
Nevertheless, numerical experiments exhibit first order strong 
convergence, which is consistent with the fact that the volatility 
in uniform, so it seems there remains a gap here in the theory.

\if 0

\subsubsection{Stochastic Duffing -- van der Pol oscillator}

This model, combining features of the Duffing and van der Pol 
equations, has been used in certain aeroelasticity problems \cite{hj15}.
\begin{eqnarray*}
\D X_t^{(1)} &=&\,X_t^{(2)}\,\D t\\
\D X_t^{(2)} &=&\left(\alpha_1 X_t^{(1)}-\alpha_2 X_t^{(2)}
- \alpha_3 X_t^{(2)}(X_t^{(1)})^2-(X_t^{(1)})^3 \right)\,\D t\\
&&+\ \sqrt{ \beta_1^2 (X_t^{(1)})^2 + \beta_2^2(X_t^{(2)})^2 + \beta_3^2} \ \D W_t
\end{eqnarray*}
where $\alpha_1,\alpha_2,\alpha_3,\beta_1,\beta_2,\beta_3 > 0$,
so that
\[
f(x) \equiv
\begin{pmatrix}
x_2\\
\alpha_1 x_1-\alpha_2 x_2-\alpha_3 x_2 (x_1)^2-(x_1)^3
\end{pmatrix},~~
g(x) \equiv
\begin{pmatrix}
0 \\
\sqrt{ \beta_1^2 x_1^2 + \beta_2^2x_2^2 + \beta_3^2}
\end{pmatrix}.
\]
In this case, the drift coefficient does not satisfy the 
one-sided linear growth condition, and therefore none of 
the theory is applicable.

\fi

\subsubsection{Stochastic Lorenz equation}

This is a three-dimensional system modelling convection rolls 
in the atmosphere \cite{hj15}.
\begin{eqnarray*}
\D X_t^{(1)} &=& \left(\alpha_1X_t^{(2)}-\alpha_1X_t^{(1)}\right) \,\D t
                + \beta_1X_t^{(1)}\D W_t^{(1)}\\
\D X_t^{(2)} &=& \left(\alpha_2X_t^{(1)}-X_t^{(2)}-X_t^{(1)}X_t^{(3)}\right) \,\D t
                + \beta_2X_t^{(2)}\D W_t^{(2)}\\
\D X_t^{(3)}&=& \left(X_t^{(1)}X_t^{(2)}-\alpha_3X_t^{(3)}\right) \,\D t
                + \beta_3X_t^{(3)}\D W_t^{(3)}
\end{eqnarray*}
where $\alpha_1,\alpha_2,\alpha_3,\beta_1,\beta_2,\beta_3 > 0$,
and so we have:
\[
f(x) \equiv 
\begin{pmatrix}
\alpha_1(x_2-x_1)\\
\alpha_2 x_1-x_2-x_1x_3\\
x_1x_2-\alpha_3x_3
\end{pmatrix}, ~~
g(x) \equiv
\begin{pmatrix}
\beta_1x_1&0&0\\
0&\beta_2x_2&0\\
0&0&\beta_3x_3
\end{pmatrix}.
\]
The diffusion coefficient is globally Lipschitz, and since 
$\langle x, f(x) \rangle$ consists solely of quadratic terms,
the drift satisfies the one-sided linear growth condition.
Noting that $\|f\|^2 \approx x_1^2(x_2^2+x_3^2) < \|x\|^4 $ as
$\|x\|\rightarrow\infty$, an appropriate maximum timestep is
\[
h(x) = \min(T, \gamma \|x\|^{-2}),
\]
for any $\gamma\!>\!0$.
However, the drift does not satisfy the one-sided Lipschitz 
condition, and therefore the theory on the order of strong 
convergence is not applicable.

\if 0

\subsubsection{Stochastic Brusselator in the well-stirred case}

This is a model for a trimolecular chemical reaction.
\cite{hj15}
\begin{eqnarray*}
\D X_t^{(1)} &=&
\left(\delta-(\alpha+1)X_t^{(1)} + X_t^{(2)}(X_t^{(1)})^2\right) \,\D t
+ g_1(X_t^{(1)})\,\D W_t^{(1)}\\
\D X_t^{(2)} &=&
\left(\alpha X_t^{(1)} - X_t^{(2)}(X_t^{(1)})^2\right) \,\D t
+ g_2(X_t^{(2)})\,\D W_t^{(2)}
\end{eqnarray*}
where actually we have:
\[
f(x) \equiv
\begin{pmatrix}
\delta-(\alpha+1)x_1+ x_2(x_1)^2\\
\alpha x_1-x_2(x_1)^2
\end{pmatrix}, ~~~
g(x) \equiv 
\begin{pmatrix}
g_1(x_1)&0\\
0&g_2(x_2)
\end{pmatrix}.
\]
The drift coefficient is not globally one-sided Lipschitz 
continuous and even fails to satisfy the one-sided linear 
growth condition.

\fi

\subsubsection{Langevin equation}

The multi-dimensional Langevin equation is
\begin{equation}
\label{eq:Langevin}
\D X_t = - \nabla V(X_t) \, \D t + \D W_t.
\end{equation}
In molecular dynamics applications, $V(x)$ represent the potential energy 
of a molecule, while in other applications
$V = - \, \halfs \log \pi + \mbox{const}$
where $\pi: \RR^m \rightarrow \RR^+$ is an invariant measure.
$V$ is usually defined on $\RR^m$, infinitely differentiable, and 
satisfies all of the assumptions in this paper so the theory is fully 
applicable, leading to order 1 strong convergence.

\subsubsection{FENE model}

The FENE (Finitely Extensible Nonlinear Elastic) model is a Langevin 
equation describing the motion of a long-chained polymer in a liquid
\cite{bs07,glw16}. The unusual feature of the FENE model is that the
potential $V(x)$ becomes infinite for finite values of $x$.
In the simplest case of a molecule with a single bond, $x$ is 
three-dimensional and $V(x)$ takes 
the form
$
V(x) = - \log (1 \!-\! \|x\|^2).
$
The SDE is defined on $\|x\|\!<\!1$, with the drift term ensure that 
$\|X_t\|\!<\! 1$ for all $t\!>\!0$.  Also, it can be verified that 
$\langle x,f(x) \rangle\!\leq\! 0$.

Because the SDE is not defined on all of $\RR^3$, the theory in 
this paper is not applicable.  However, it was one of the original
motivations for the analysis in this paper, since it seems natural 
to use an adaptive timestep, taking smaller timestep as $\|\hX_t\|$ 
approaches 1, to maintain good accuracy, as the drift varies so
rapidly near the boundary, and to greatly reduce the possibility 
of needing to clamp the computed solution to prevent it from 
crossing a numerical boundary at radius $1\!-\!\delta$ for some 
$\delta\!\ll\! 1$ \cite{glw16}.
Numerical results indicate that the order of strong convergence 
is very close to 1.

\begin{figure}
\begin{center}
\includegraphics[width=0.9\textwidth]{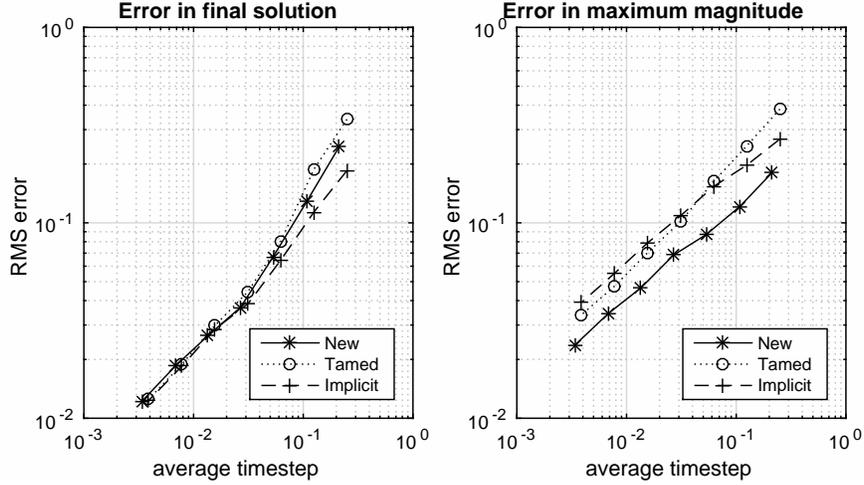}
\end{center}
\caption{Numerical results for testcase 1}
\label{fig1}
\end{figure}

\subsection{Numerical results}

The numerical tests include three testcases from \cite{hjk12} 
plus one new test which provides some motivation for the research 
in this paper.

\subsubsection{Testcase 1}

The first scalar testcase taken from \cite{hjk12} is
\[
\D X_t = - X_t^5\, \D t + X_t\, \D W_t,  ~~~~ X_0 = 1,
\]
with $T\!=\!1$.  The three methods tested are the Tamed Euler 
scheme, with $C\!=\!1$, the implicit Euler scheme, and the new
Euler scheme with adaptive timestep
\[
h^\delta(x) = \delta \, \frac{\max(1,|x|)}{\max(1,|f(x)|}.
\]
Figure \ref{fig1} shows the the root-mean-square error
plotted against
the average timestep. The plot on the left shows the error 
in the terminal time, while the plot on the right shows the 
error in the maximum magnitude of the solution.  The error 
in each case is computed by comparing the numerical solution 
to a second solution with a timestep, or $\delta$, which 
is 4 times smaller.

When looking at the error in the final solution, all 3 methods 
have similar accuracy with $\halfs$ order strong convergence.
However, as reported in \cite{hjk12}, the cost of the implicit 
method per timestep is much higher.
The plot of the error in the maximum magnitude shows that the 
new method is slightly more accurate, presumably because it 
uses smaller timesteps when the solution is large.  The plot 
was included to show that comparisons between numerical methods 
depend on the choice of accuracy measure being used.

\begin{figure}
\begin{center}
\includegraphics[width=0.9\textwidth]{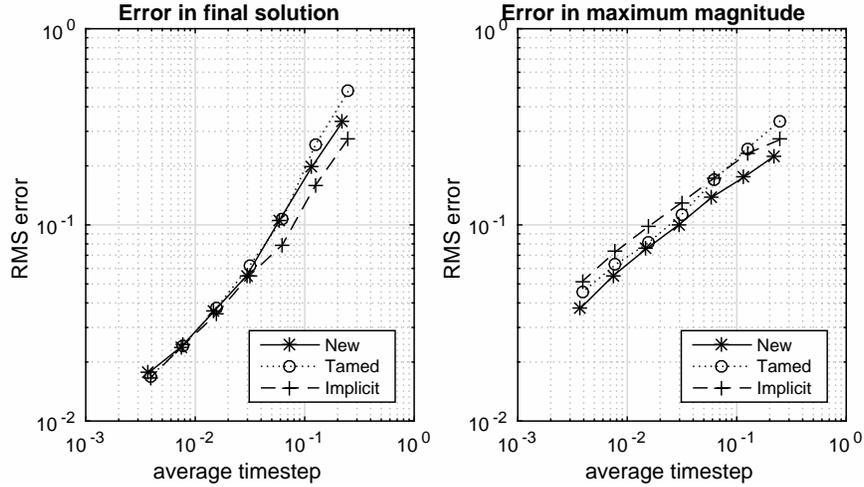}
\end{center}
\caption{Numerical results for testcase 2}
\label{fig2}
\end{figure}

\subsubsection{Testcase 2}

The second scalar testcase taken from \cite{hjk12} is
\[
\D X_t = (X_t \!-\! X_t^3)\, \D t + X_t\, \D W_t,  ~~~~ X_0 = 1,
\]
with $T\!=\!1$. The results in Figure \ref{fig2}
are similar to the first testcase.

\begin{figure}
\begin{center}
\includegraphics[width=0.9\textwidth]{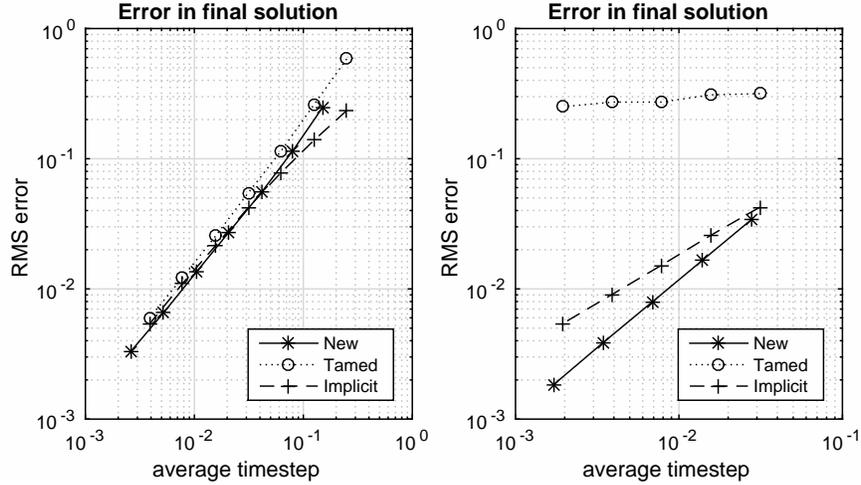}
\end{center}
\caption{Numerical results for testcases 3 (on left) and 4 (on right)}
\label{fig3}
\end{figure}

\subsubsection{Testcase 3}

The third testcase taken from \cite{hjk12} is 10-dimensional,
\[
\D X_t = (X_t \!-\! \|X_t\|^2 \, X_t)\, \D t + \D W_t,  ~~~~ X_0 = 0,
\]
with $T\!=\!1$. The results in the left-hand plot in 
Figure \ref{fig3} show that the error in the final value exhibits
order 1 strong convergence using all 3 methods, as expected.

\subsubsection{Testcase 4}

The final testcase is for the 3-dimensional FENE SDE discussed previously,
\[
\D X_t = - \frac{X_t}{1 \!-\! \|X_t\|^2} \ \D t + \D W_t, ~~~~X_0 = 0,
\]
with $T\!=\!1$.  As commented on previously, this SDE is not 
covered by the theory in this paper, but it is a motivation for 
the research because it is natural to use an adaptive timestep of
the form
\[
h^\delta(x) = \frac{\delta}{4} \, (1\!-\!\|x\|^2)
\]
to reduce the timestep when $\|\hX_t\|$ approaches the maximum radius.

All three methods are clamped so that they do not exceed a radius 
of $r_{max}=1\!-\!10^{-10}$; if the new computed value $\hX_{t_{n+1}}$ exceeds 
this radius then it is replaced by $(r_{max}/\|\hX_{t_{n+1}}\|) \hX_{t_{n+1}}$.

The numerical results in the right-hand plot in Figure \ref{fig3} 
show that the new scheme is considerably more accurate than either 
of the others, confirming that an adaptive timestep is desirable in 
this situation in which the drift varies enormously as $\|\hX_t\|$ 
approaches the maximum radius.

\newpage

\section{Proofs}

\label{sec:proofs}

This section has the proofs of the three main theorems in this paper,
one on stability, and two on the order of strong convergence.

\subsection{Theorem \ref{thm:stability}}

\begin{proof}

The proof proceeds in four steps.  First, we introduce a constant $K$ 
to modify our discretisation scheme. Second, we derive an upper bound 
for $\|\hX_t^K\|^p$. Third, we show that the moments
$\EE[ \sup_{0\leq t\leq T}\|\hX_t^K\|^p ]$ are each bounded by a constant $C_{p,T}$ 
which depends on $p$ and $T$ but is independent of $K$. Finally, we 
reach the desired conclusion by taking the limit $K\!\rightarrow\!\infty$ 
and using the Monotone Convergence theorem.

The proof is given for $p\!\geq\! 4$; the result for 
$0\!\leq\! p \!<\! 4$ follows from H{\"o}lder's inequality.

\noindent
\textbf{{Step 1: K-Scheme definition}}

For any $K\!>\!\|X_0\|,$ we modify our discretisation scheme to:
\begin{equation}
\hX_\tnp^K = P_K\left(\hX_\tn^K + f(\hX_\tn^K)\, h_n + g(\hX_\tn^K)\, \DW_n\right),
\label{Kdiscrete}
\end{equation}
where $P_K(Y)\triangleq\min(1,K/\|Y\|)\, Y$ and therefore 
$\|\hX_\tn^K\|\!\leq\! K,\, \forall n$.
The piecewise constant approximation for intermediate times is again 
$\bX_t^K=\hX_\td^K$, and the continuous approximation is
\[
\hX_t^K=P_K\left( \hX_\td^K+f(\hX_\td^K)\, (t\!-\!\td) + g(\hX_\td^K)\, (W_t\!-\!W_\td)\right).
\]
Since $h(x)$ is continuous and strictly positive, it follows that
\[
h_{min}^K \triangleq \inf_{\|x\|\leq K}h(x)>0.
\]
This strictly positive lower bound for the timesteps implies that
$T$ is attainable.


\noindent
\textbf{{Step 2: $p$th-moment of K-Scheme solution}}

$\|P_K(Y)\| \!\leq\! \|Y\|$, so if we define $\phi(x)\triangleq x\!+\!h(x)f(x)$,
then (\ref{Kdiscrete}) gives 
\begin{eqnarray*}
\|\hX_\tnp^K\|^2  & \leq & \|\hX_\tn^K\|^2 
+ 2\, h_n\left( \langle\hX_\tn^K,f(\hX_\tn^K)\rangle + \halfs h_n \|f(\hX_\tn^K)\|^2 \right)
\\ &&
+\, 2\,\langle \phi(\hX_\tn^K), g(\hX_\tn^K)\,\DW_n \rangle
+ \|g(\hX_\tn^K)\, \DW_n\|^2
\end{eqnarray*}
Using condition (\ref{eq:timestep}) for $h(x)$ then gives
\begin{eqnarray}
\label{2os3fs4}
\|\hX_\tnp^K\|^2 & \leq & 
\|\hX_\tn^K\|^2 \ +\  2\,\alpha\|\, \hX_\tn^K\|^2h_n \ +\ 2\,\beta\, h_n 
\nonumber\\&& 
+\, 2\,\langle \phi(\hX_\tn^K), g(\hX_\tn^K)\,\DW_n \rangle
+ \|g(\hX_\tn^K)\, \DW_n\|^2.
\end{eqnarray}
Similarly, for the partial timestep from $\td$ to $t$, 
since $(t\!-\!\td)\leq h_{n_t}$
\begin{equation}
\langle \hX_\td^K, f(\hX_\td^K) \rangle + \halfs \, (t\!-\!\td)\, \|f(\hX_\td^K)\|^2 
\leq \alpha \, \|\hX_\td^K\|^2 + \beta,
\label{eq:partial}
\end{equation}
and therefore we obtain
\begin{eqnarray}
\label{2os3fs4t}
\|\hX_t^K\|^2 & \leq & 
\|\hX_\td^K\|^2 \ +\  2\,\alpha\|\, \hX_\td^K\|^2 (t\!-\!\td) \ +\  2\,\beta\, (t\!-\!\td)
\nonumber\\&& 
+ \, 2\,\langle \hX_\td^K \!+\! f(\hX_\td^K)\, (t\!-\!\td), g(\hX_\td^K)\,(W_t\!-\!W_\td) \rangle
\nonumber\\&& 
+ \, \|g(\hX_\td^K)\, (W_t\!-\!W_\td)\|^2.
\end{eqnarray}
Summing (\ref{2os3fs4}) over multiple timesteps and then adding (\ref{2os3fs4t}) gives
\begin{eqnarray*}
\|\hX^K_t\|^2 & \leq & 
\|X_0\|^2 \ +\ 2\,\alpha\left( \sum_{k=0}^{n_t-1}\|\hX^K_{t_k}\|^2h_k + \|\hX_\td^K\|^2 (t\!-\!\td) \right)
\ +\ 2\, \beta\, t
\\[-0.1in] &&
+\, 2 \sum_{k=0}^{n_t-1}\langle \phi(\hX_{t_k}^K), g(\hX_{t_k}^K)\DW_k \rangle)
+  \sum_{k=0}^{n_t-1}\|g(\hX_{t_k}^K)\, \DW_k\|^2
\\ &&
+\,  2 \langle  \hX_\td^K \!+\! f(\hX_\td^K)\, (t\!-\!\td),\, g(\hX_\td^K)(W_t\!-\!W_\td) \rangle
\\[0.05in] &&
+\, \|g(\hX_\td^K)\, (W_t\!-\!W_\td)\|^2.
\end{eqnarray*}

Re-writing the first summation as a Riemann integral, and the second 
as an It{\^o} integral, raising both sides to the power $p/2$ 
and using Jensen's inequality, we obtain
\begin{eqnarray}
\label{os3fs4}
\|\hX^K_t\|^p & \!\!\leq\!\! & 
7^{p/2 - 1}\left\{ \rule{0in}{0.25in}
\|X_0\|^p \ +\  
\left(2\, \alpha \int_0^t\|\bX_s^K\|^2\,\D s\right)^{p/2}\ +\  
(2\, \beta\, t)^{p/2} 
\right. \nonumber \\ && 
~~~~ + \, \left|\, 2\! \int_0^\td \! \langle \phi(\bX_s^K), g(\bX_s^K)\, \D W_s\rangle \,\right|^{p/2}
\!\! + \left(\sum_{k=0}^{n_t-1}\|g(\bX_{t_k}^K) \, \DW_k \|^2 \! \right)^{p/2}
\nonumber \\ &&
~~~~~~~~ + \, \left| 2\, \langle  \bX_t^K \!+\! f(\bX_t^K)\, (t\!-\!\td), g(\bX_t^K) (W_t\!-\!W_\td) \rangle \right|^{p/2}
\!\!
\nonumber \\ && \left.
~~~~~~~~ +\, \|g(\bX_t^K) (W_t\!-\!W_\td)\|^p 
\rule{0in}{0.25in} \right\}.
\end{eqnarray}


\noindent
\textbf{{Step 3: Expected supremum of $p$th-moment of K-Scheme}}

For any $0\leq t\leq T$ we take the supremum on both sides of inequality (\ref{os3fs4}) and 
then take the expectation to obtain
\[
\EE\left[ \sup_{0\leq s\leq t}\|\hX^K_s\|^p\right] \ \leq \ 
 7^{p/2 - 1}\left( I_1 + I_2  + I_3  + I_4 + I_5 \right), 
\]
where
\begin{eqnarray*}
I_1 &=& \|X_0\|^p + \EE\left[ \left(2 \alpha \int_0^t\|\bX_s^K\|^2\,\D s\right)^{p/2} \right] + (2\, \beta\, t)^{p/2}, 
\\[0.05in]
I_2 &=& \EE\left[ \sup_{0\leq s\leq \td}\left| \,2\! \int_0^s 
\langle \phi(\bX_u^K), g(\bX_u^K)\,\D W_u \rangle \right|^{p/2} \right],
\\[0.05in]
I_3 &=& \EE\left[ \left(\sum_{k=0}^{n_t-1}\|g(\bX_{t_k}^K) \, \DW_k\|^2\right)^{p/2}\right],
\\[0.05in]
I_4 &=& \EE\left[ \sup_{0\leq s\leq t} 
\left|2 \langle \bX_s^K \!+\! f(\bX_s^K)\, (s\!-\!\sd), g(\bX_s^K)  (W_s\!-\!W_\sd) \rangle \right|^{p/2}\right],
\\[0.05in]
I_5 &=& \EE\left[\sup_{0\leq s\leq t} \|g(\bX_s^K) \, (W_s\!-\!W_\sd)\|^p \right].
\end{eqnarray*}
We now consider $I_1, I_2, I_3, I_4, I_5$ in turn.  Using Jensen's inequality, we obtain
\[
I_1 \leq \|X_0\|^p 
+ (2 \alpha)^{p/2} T^{p/2-1}\!\! \int_0^t\EE\left[ \sup_{0\leq u \leq s} \|\hX_u^K\|^p\right]\,\D s
+ (2\, \beta\, T)^{p/2}.
\]

For $I_2$, we begin by noting that ue to condition (\ref{eq:timestep}), 
for $u\!<\!\td$ we have
\begin{eqnarray*}
\|\phi(\bX_u^K)\|^2 
   &=& \|\bX_u^K\|^2 + 2\, h(\bX_u^K)\, 
\left( \langle \bX_u^K, f(\bX_u^K)\rangle + \halfs\, h(\bX_u^K) \|f(\bX_u^K)\|^2 \rule{0in}{0.16in} \right)
\\ &\leq&
\|\bX_u^K\|^2+ 2 \, h(\bX_u^K) \, (\alpha\|\bX_u^K\|^2 +\beta)
\\ &\leq& (1+2\,\alpha\, T) \|\bX_u^K\|^2 + 2\, \beta\, T,
\end{eqnarray*}
and hence by Jensen's inequality
\[
\|\phi(\bX_u^K)\|^{p/2} \leq 2^{p/4-1} \left(\rule{0in}{0.16in}
(1+2\,\alpha\, T)^{p/4} \|\bX_u^K\|^{p/2} + (2\,\beta\, T)^{p/4} 
\right).
\]
In addition, the linear growth condition (\ref{eq:g_growth}) gives
\[
\|g(\bX_u^K)\|^{p/2}
\leq  2^{p/4-1} \left( \alpha^{p/4} \|\bX_u^K\|^{p/2} + \beta^{p/4} \right),
\]
and combining the last two equation, there exists a constant 
$c_{p,T}$ depending on $p$ and $T$,
in addition to $\alpha, \beta$, such that
\[
\|\phi(\bX_u^K)^T g(\bX_u^K) \|^{p/2} \,\leq\, c_{p,T} 
\left( \|\bX_u^K\|^p + 1\right).
\]
Then, by the Burkholder-Davis-Gundy inequality, there is a constant $C_p$ such that
\begin{eqnarray*}
I_2 &\leq & 
C_p\, 2^{p/2}\, \EE\left[ \left(\int_0^t \| \phi(\bX_u^K)^T g(\bX_u^K) \|^2 \, \D u \right)^{p/4} \right] 
\\ &\leq&
C_p\, 2^{p/2} \, T^{p/4-1} \, \EE\left[ \int_0^t \| \phi(\bX_u^K)^T g(\bX_u^K) \|^{p/2} \, \D u \right] 
\\ &\leq& 
c_{p,T}\, C_p\, 2^{p/2} \, T^{p/4-1} \, \left(
\int_0^t \EE\left[ \sup_{0\leq u\leq s}\|\hX_u^K\|^p\right] \D s \ +\ T
\right).
\label{os3fn8}
\end{eqnarray*}

For $I_3$, we start by observing that by standard results there 
exists a constant $c_p$ which depends solely on $p$ such that 
for any $t_k\!\leq\!s<t_{k+1}$,
\begin{equation}
\EE[ \sup_{t_k\leq u \leq s} \|\, W_u\!-\!W_{t_k} \,\|^p\ |\ \mathcal{F}_{t_k}] 
\ =\ c_p\, (s\!-\!\sd)^{p/2}.
\label{eq:DW}
\end{equation}
One variant of Jensen's inequality, 
when $h_k, u_k$ are both positive and $p\!\geq\! 1$, is
\[
\left( \sum_k h_k\, u_k \right)^p  \leq
\left( \sum_k h_k \right)^{p-1} \sum_k h_k u^p_k.
\]
Using this, and (\ref{eq:DW}) with $s\equiv t_{k+1}$ so that $s-\sd = h_k$,
\begin{eqnarray*}
I_3 &\leq& 
T^{p/2-1}\, 
\EE\left[\ \sum_{k=0}^{n_t-1} h_k \, \|g(\bX_{t_k}^K)\|^p\ \frac{\|\DW_k\|^p}{h_k^{p/2}}
\right]
\\ &\leq & T^{p/2-1}\, c_p\ \EE\left[\ \int_0^t \|g(\bX_s^K)\|^p\, \D s \right].
\end{eqnarray*}
Using condition (\ref{eq:g_growth}), and Jensen's inequality, we then obtain
\[
I_3 \leq (2\,T)^{p/2-1}\, c_p \left(
\alpha^{p/2} \int_0^t \EE\left[ \sup_{0\leq u \leq s} \|\hX_u^K\|^p\right]\D s
\ +\ \beta^{p/2}\, T\right).
\]

For $I_4$, using (\ref{eq:partial}) and following the same argument as 
for $I_2$, there exists a constant $c_{p,T}$ depending on both $p$ and 
$T$ such that
\[
\|\bX_s^K\!+\! f(\bX_s^K) (s\!-\!\sd)\|^{p/2} \| g(\bX_s^K)\|^{p/2} \leq
c_{p,T}\, \left( \|\bX_s^K\|^p + 1\right).
\]
Therefore, again using (\ref{eq:DW}),
\begin{eqnarray*}
I_4 &\leq& 
2^{p/2}\, \EE\left[ \sup_{0\leq s\leq t} \left| 
\langle \bX_s^K\!+\! f(\bX_s^K) (s\!-\!\sd), g(\bX_s^K)\, (W_s\!-\!W_\sd) \rangle \right|^{p/2}\right]
\\ &\leq&
 c_{p,T} \, 2^{p/2}\, \EE\left[ \sum_{k=0}^{n_t-1} 
\left(\|\bX_{t_k}^K\|^p \!+\! 1\right) \sup_{t_k\leq s< t_{k+1}} \| (W_s\!-\!W_\sd)\|^{p/2} \right.
\\&& \hspace{1.0in} \left. +\, 
\left(\|\bX_t^K\|^p \!+\! 1\right) \sup_{\td\leq s\leq t} \| (W_s\!-\!W_\sd)\|^{p/2}\right]
\\ &\leq& 
c_{p/2}  \, c_{p,T}\, 2^{p/2}\, T^{p/4-1}\ 
\EE\left[ \sum_{k=0}^{n_t-1} \!\! \left(\|\bX_{t_k}^K\|^p \!+\! 1\right)\! h_k
                             + \left(\|\bX_t^K\|^p \!+\! 1\right)\!(t\!-\!\td) \right] 
\\ &\leq& c_{p/2} \, c_{p,T} \, 2^{p/2}\, T^{p/4-1} \left(
\int_0^t  \EE\left[ \sup_{0\leq u \leq s} \|\hX_u^K\|^p\right] \D s \ +\ T
\right).
\end{eqnarray*}

Similarly, using the same definition for $c_p$, we have
\begin{eqnarray*}
I_5 &\leq& 
c_p \, (2\,T)^{p/2-1} \left(
\alpha^{p/2} \int_0^t   \EE\left[ \sup_{0\leq u \leq s} \|\hX_u^K\|^p \right] \D s
\ +\ \beta^{p/2}\, T
\right).
\end{eqnarray*}

Collecting together the bounds for $I_1, I_2, I_3, I_4, I_5$,
we conclude that there exist constants $C^1_{p,T}$ and $C^2_{p,T}$ such that
\[
\EE\left[ \sup_{0\leq s\leq t}\|\hX^K_{s}\|^p\right]
\leq C_{p,T}^1+C_{p,T}^2 \int_0^t\EE\left[ \sup_{0\leq u\leq s}\|\hX_u^K\|^p\right] \D s,
\]
and Gr\"{o}nwall's inequality gives the result
\[
\EE\left[ \sup_{0\leq t\leq T}\|\hX^K_t\|^p\right] 
\,\leq\, C_{p,T}^{1} \, \exp(C_{p,T}^{2}\, T) 
\,\triangleq\, C_{p,T} 
\,<\, \infty.
\]

\noindent
\textbf{{Step 4: Expected supremum of $p$th-moment of $\hX_t$}}

For any $\omega\!\in\!\Omega$, $\hX_t\!=\!\hX_t^K$ 
for all $0\!\leq\!t\!\leq\!T$ if, and only if, 
$\sup_{0\leq t\leq T} \|\hX_t\|
\!\leq\! K$.
Therefore, by the Markov inequality,
\[
  \PP(\sup_{0\leq t\leq T} \|\hX_t\| < K)
\, =\, \PP(\sup_{0\leq t\leq T} \|\hX_t^K\| < K)
\,\geq\, 1 \!-\! \EE[\sup_{0\leq t\leq T} \|\hX_t^K\|^4] / K^4 
\,\rightarrow\, 1
\]
as $K\rightarrow\infty$. Hence, almost surely, 
$\displaystyle \sup_{0\leq t\leq T} \|\hX_t\| < \infty$
and $T$ is attainable.  Also,
\[
\lim_{K\rightarrow\infty} \sup_{0\leq t\leq T}\|\hX^K_t(\omega)\|
= \sup_{0\leq t\leq T}\|\hX_t(\omega)\|
\]
and for $0\!<\! K_1 \!\leq\! K_2$,
\[
\sup_{0\leq t\leq T}\|\hX^{K_1}_t(\omega)\|
\ \leq\ \sup_{0\leq t\leq T}\|\hX^{K_2}_t(\omega)\|
\ \leq\ \sup_{0\leq t\leq T}\|\hX_t(\omega)\|.
\]
Therefore, by the Monotone Convergence Theorem, 
\[
\EE\left[ \sup_{0\leq t\leq T}\|\hX_t\|^p\right]
\ =\ \lim_{K\rightarrow\infty} \EE\left[ \sup_{0\leq t\leq T}\|\hX^K_t\|^p\right]
\ \leq\ C_{p,T}.
\]
\end{proof}

\subsection{Theorem \ref{thm:convergence_order}}

\begin{proof}
The approach which is followed is to bound the approximation
error $e_t\triangleq \hX_t - X_t$ by terms which depend on either 
$\hX_s\!-\!\bX_s$ or $e_s$, and then use local analysis within 
each timestep to bound the former, and Gr{\"o}nwall's inequality 
to handle the latter.

The proof is again given for $p\!\geq\! 4$; the result for 
$0\!\leq\! p \!<\! 4$ follows from H{\"o}lder's inequality.

We start by combining the original SDE with (\ref{eq:approx_SDE})
to obtain
\[
\D e_t
\ =\ \left( f(\bX_t)\!-\!f(X_t)\right) \D t
   + \left( g(\bX_t)\!-\!g(X_t)\right) \D W_t,
\]
and then by It{\^o}'s formula, together with $e_0\!=\!0$, we get
\begin{eqnarray*}
\|e_t\|^2 &\leq&
2 \int_0^t \langle e_s, f(\hX_s)\!-\!f(X_s)\rangle \,\D s -
2 \int_0^t\langle e_s, f(\hX_s)\!-\!f(\bX_s)\rangle \,\D s\\ 
&&+\, \int_0^t \|g(\bX_s)\!-\!g(X_s)\|^2 \,\D s
+ 2 \int_0^t \langle e_s, (g(\bX_s)\!-\!g(X_s))\,\D W_s \rangle.
\end{eqnarray*}
Using the conditions in Assumption \ref{assp:Lipschitz}, 
(\ref{eq:onesided_Lipschitz}) implies that
\[
\langle e_s, f(\hX_s)\!-\!f(X_s)\rangle \leq \halfs \alpha\, \|e_s\|^2,
\]
(\ref{eq:local_Lipschitz}) implies that
\begin{eqnarray*}
\left| \langle e_s, f(\hX_s)\!-\!f(\bX_s) \rangle \right|
  &\leq& \|e_s\| \, L(\hX_s,\bX_s) \, \| \hX_s\!-\!\bX_s \|
\\&\leq& \halfs \|e_s\|^2 + \halfs L(\hX_s,\bX_s)^2 \| \hX_s\!-\!\bX_s \|^2
\end{eqnarray*}
where $L(x,y)\triangleq \gamma(\|x\|^q + \|y\|^q) + \mu$, 
and (\ref{eq:g_Lipschitz}) gives
\[
\|g(\bX_s)\!-\!g(X_s)\|^2
\ \leq\ \halfs\, \alpha \,  \|\bX_s\!-\!X_s\|^2
\ \leq\ \alpha\, \|e_s\|^2 + \alpha \, \|\hX_s\!-\!\bX_s\|^2.
\]
Hence,
\begin{eqnarray*}
  \|e_t\|^2 &\leq&  (2\alpha\!+\!1) \int_0^t \| e_s \|^2 \,\D s 
+ \int_0^t \left( L(\hX_s,\bX_s)^2 \!+\! \alpha \right)\|\hX_s\!-\!\bX_s\|^2\, \D s
\\&& +\, 2 \int_0^t \langle e_s, (g(\bX_s)\!-\!g(X_s))\,\D W_s \rangle.
\end{eqnarray*}
and then by Jensen's inequality we obtain
\begin{eqnarray*}
\|e_t\|^p &\leq&
(3T)^{p/2-1} (2\alpha\!+\!1)^{p/2} \int_0^t \| e_s \|^p \,\D s 
\\ &+& (3T)^{p/2-1} \int_0^t 
\left( L(\hX_s,\bX_s)^2 \!+\!  \alpha \right)^{p/2}\|\hX_s\!-\!\bX_s\|^p\, \D s
\\ &+& 3^{p/2-1} 2^{p/2}
\left| \int_0^t \langle e_s, (g(\bX_s)\!-\!g(X_s))\,\D W_s \rangle \right|^{p/2}.
\end{eqnarray*}

Taking the supremum of each side, and then the expectation yields
\begin{eqnarray*}
\EE\left[\sup_{0\leq s \leq t} \|e_s\|^p \right]
&\leq&
  (3T)^{p/2-1}  (2\alpha\!+\!1)^{p/2} \int_0^t \EE\left[\sup_{0\leq u\leq s} \| e_u \|^p \right] \,\D s 
\\ &+& (3T)^{p/2-1} \!\! \int_0^t 
\! \EE\!\left[ \left(\! L(\hX_s,\bX_s)^2 \!+\! \alpha \right)^{p/2} \! \|\hX_s\!-\!\bX_s\|^p\right] \D s
\\ &+& 
 3^{p/2-1} 2^{p/2} \EE\left[
\sup_{0\leq s \leq t} \left| \int_0^s \langle e_u, (g(\bX_u)\!-\!g(X_u))\,\D W_u \rangle \right|^{p/2}
\right].
\end{eqnarray*}
By the H{\"o}lder inequality, 
\begin{eqnarray*}
\lefteqn{\hspace*{-0.5in}
\EE\left[ \left( L(\hX_s,\bX_s)^2 \!+\! \alpha \right)^{p/2} \|\hX_s\!-\!\bX_s\|^p\right] 
}
\\ &\leq&
\left( \EE\left[ \left( L(\hX_s,\bX_s)^2 \!+\! \alpha \right)^p\right]
 \EE\left[ \|\hX_s\!-\!\bX_s\|^{2p}\right]\right)^{1/2},
\end{eqnarray*}
and $\EE\left[ \left( L(\hX_s,\bX_s)^2 \!+\! \alpha \right)^p \right]$
is uniformly bounded on $[0,T]$ 
due to the stability property in Theorem \ref{thm:stability}.

In addition, by the Burkholder-Davis-Gundy inequality (which gives the 
constant $C_p$ which depends only on $p$) followed by Jensen's inequality 
plus the Lipschitz condition for $g$, we obtain
\begin{eqnarray*}
\lefteqn{\hspace*{-0.2in}
\EE\left[
\sup_{0\leq s \leq t} \left| \int_0^s \langle e_u, (g(\bX_u)\!-\!g(X_u))\,\D W_u \rangle \right|^{p/2}
\right]
} 
\\ &\leq&
C_p\ \EE\left[ \left( \int_0^t \|e_s\|^2 \|g(\bX_s)\!-\!g(X_s)\|^2 \, \D s \right)^{p/4}\right].
\\ &\leq&
C_p \ T^{p/4-1}\, (\halfs \alpha)^{p/4} \EE\left[ \int_0^t \|e_s\|^{p/2} \|\bX_s\!-\!X_s\|^{p/2} \, \D s \right]
\\ &\leq&
C_p \ T^{p/4-1}\, (\halfs \alpha)^{p/4} \EE\left[ \int_0^t \halfs \, \|e_s\|^p + \halfs \, \|\bX_s\!-\!X_s\|^p \, \D s \right]
\\ &\leq&
C_p \ T^{p/4-1}\, (\halfs \alpha)^{p/4} \EE\left[ \int_0^t (\halfs \!+\!2^{p-2})\|e_s\|^p + 2^{p-2} \|\hX_s\!-\!\bX_s\|^p \, \D s \right].
\end{eqnarray*}
Hence, using $\EE[ \|\hX_s\!-\!\bX_s\|^p] \leq (\EE[ \|\hX_s\!-\!\bX_s\|^{2p}])^{1/2}$, 
there are constants $C^1_{p,T}, C^2_{p,T}$ such that
\begin{equation}
\EE\left[\sup_{0\leq s \leq t} \|e_s\|^p \right]
\leq
C^1_{p,T}\! \int_0^t \!\EE\!\left[\sup_{0\leq u \leq s} \|e_u\|^p \right] \D s
\, +\,
C^2_{p,T}\! \int_0^t \!\left( \EE\!\left[ \|\hX_s\!-\!\bX_s\|^{2p} \right] \right)^{\!1/2} 
\!\!\D s.
\label{eq:halfway}
\end{equation}


For any $s\!\in\![0,T]$,
$\hX_s\!-\!\bX_s = f(\hX_{\sd}) (s\!-\!\sd) + g(\hX_{\sd}) (W_s \!-\! W_\sd)$,
and hence, by a combination of Jensen and H{\"o}lder inequalities, 
we get
\begin{eqnarray*}
\EE\left[ \|\hX_s\!-\!\bX_s\|^{2p}\right] 
&\leq& 2^{2p-1} \left( \EE\left[ \|f(\hX_{\sd})\|^{4p} \right]
\EE \left[ (s\!-\!\sd)^{4p} \right]\right)^{1/2}
\\ &+& 
2^{2p-1}  \left( \EE\left[ \|g(\hX_{\sd})\|^{4p} \right]
                 \EE\left[ \|W_s \!-\! W_\sd\|^{4p} \right] \right)^{1/2}.
\end{eqnarray*}
$\EE[ \|f(\hX_{\sd})\|^{4p}]$ and $\EE[ \|g(\hX_{\sd})\|^{4p}]$ are 
both uniformly bounded on $[0,T]$ due to stability and the polynomial 
bounds on the growth of $f$ and $g$. Furthermore, we have 
$\EE[ (s\!-\!\sd)^{4p}]
\leq (\delta T)^{4p} \leq \delta^{2p} T^{4p}$,
and by standard results there is a constant $c_p$ such that
$\EE[ \|W_s \!-\! W_\sd\|^{4p}] 
= \EE[\ \EE[\|W_s \!-\! W_\sd\|^{4p}\, |\, {\cal F}_\sd] \ ]
\leq c_p (\delta T)^{2p}$.
Hence, there exists a constant $C^3_{p,T}\!>\!0$ such that
$\EE[\, \|\hX_s\!-\!\bX_s\|^{2p}] \leq C^3_{p,T}\, \delta^p$,
and therefore equation (\ref{eq:halfway}) gives us
\[
\EE\left[\sup_{0\leq s \leq t} \|e_s\|^p \right]
\leq
C^1_{p,T}\! \int_0^t \!\EE\!\left[\sup_{0\leq u \leq s} \|e_u\|^p \right] \D s
\, +\,
C^2_{p,T} \sqrt{C^3_{p,T}}\ T \,  \delta^{p/2},
\]
and Gr\"{o}nwall's inequality then provides the final result.
\end{proof}


\subsection{Theorem \ref{thm:convergence_order2}}

\begin{proof}
The error $e_t\triangleq \hX_t - X_t$ satisfies the SDE
$\D e_t = \left( f(\bX_t)\!-\!f(X_t) \right)\,\D t$
and hence
\begin{eqnarray*}
\|e_t\|^2
&=&
2 \int_0^t\langle e_s, f(\hX_s)\!-\!f(X_s)\rangle \,\D s \ -
2 \int_0^t\langle e_s, f(\hX_s)\!-\!f(\bX_s) \rangle \,\D s
\nonumber \\&\leq &
\alpha \int_0^t\|e_s\|^2\,\D s
- 2 \int_0^t \langle e_s, f(\hX_s)\!-\!f(\bX_s)\rangle \,\D s,
\end{eqnarray*}
due to the one-sided Lipschitz condition (\ref{eq:onesided_Lipschitz}), 
so therefore
\begin{eqnarray}
\EE\left[\sup_{0\leq s\leq t}\|e_s\|^p \right]
&\leq&\ 
\alpha^{p/2} (2T)^{p/2-1}\int_0^t\EE\left[\sup_{0\leq u\leq s}\|e_u\|^p \right]\,\D s
\nonumber \\ \label{eq:thm4}
&& +\ 
2^{p-1}\, \EE\left[\sup_{0\leq s\leq t}\left| 
\int_0^s \langle e_u, f(\hX_u)\!-\!f(\bX_u) \rangle \,\D u\,
\right|^{p/2}\right].  
\end{eqnarray}
Within a single timestep, 
$\hX_s\!-\!\bX_s = f(\bX_s)(s\!-\!\sd) + (W_s\!-\!W_\sd)$,
and therefore Lemma \ref{lemma:useful} gives
\begin{eqnarray*}
\langle e_s, f(\hX_s)\!-\!f(\bX_s) \rangle 
&=& \langle e_s, \nabla f(\bX_s) (\hX_s\!-\!\bX_s) \rangle\ +\ R_s \\[0.1in]
&=& \langle e_s, (s\!-\!\sd) \nabla f(\bX_s) f(\bX_s) \rangle\ +\ R_s \\
&&  +\ \langle (e_s\!-\!e_\sd),  \nabla f(\bX_s) (W_s\!-\!W_\sd) \rangle \\
&&  +\ \langle e_\sd, \nabla f(\bX_s) (W_s\!-\!W_\sd) \rangle,
\end{eqnarray*}
where 
$|R_s|\leq \left( \gamma\, (\|\hX_s\|^q \!+\! \|\bX_s\|^q) + \mu\right)\, \|e_s\| \, \|\hX_s\!-\!\bX_s \|^2$.
Hence,
\[
\EE\left[\sup_{0\leq s\leq t}\left| 
\int_0^s \langle e_u, f(\hX_u)\!-\!f(\bX_u) \rangle \,\D u \,
\right|^{p/2}\right]  \leq 4^{p/2-1} (I_1 + I_2 + I_3 + I_4),
\]
where
\begin{eqnarray*}
I_1 & = & \EE\left[\sup_{0\leq s\leq t}\left|\int_0^s
 \langle e_u, (u\!-\!\ud)  \nabla f(\bX_u) f(\bX_u) \rangle
 \, \D u \,\right|^{p/2}\right], \\
I_2 & = & \EE\left[\sup_{0\leq s\leq t}\left|\int_0^s R_u
 \ \D u \,\right|^{p/2}\right], \\
I_3 & = & \EE\left[\sup_{0\leq s\leq t}\left|\int_0^s
\langle (e_u\!-\!e_\ud), \nabla f(\bX_u) (W_u\!-\!W_\ud) \rangle
 \, \D u \,\right|^{p/2}\right],  \\
I_4 & = & \EE\left[\sup_{0\leq s\leq t}\left|\int_0^s
\langle e_\ud, \nabla f(\bX_u) (W_u\!-\!W_\ud) \rangle
 \, \D u \,\right|^{p/2}\right].
\end{eqnarray*}

We now bound $I_1, I_2, I_3, I_4$ in turn.  Noting that $s\!-\!\sd\leq \delta\, T$,
\begin{eqnarray*}
I_1 &\leq& 
T^{p/2-1}\int_0^t \EE\left[ \|e_s\|^{p/2} (\delta T)^{p/2} \| f(\bX_s)\|^{p/2} \|\nabla f(\bX_s)\|^{p/2}\right] \ \D s 
\\ &\leq& \halfs\, T^{p/2-1} \int_0^t \EE\left[\sup_{0\leq u \leq s} \|e_u\|^p \right]\ \D s
\\ && +\ \halfs\, T^{p/2-1} (\delta T)^p \int_0^t \EE\left[
\| f(\bX_s)\|^p \|\nabla f(\bX_s)\|^p\right] \ \D s.
\end{eqnarray*}
The last integral is finite because of stability and the polynomial bounds on the growth
of both $f$ and $\nabla f$, and hence there is a constant $C^1_{p,T}$ such that
\[
I_1 \leq  \halfs\, T^{p/2-1} \int_0^t \EE\left[\sup_{0\leq u \leq s} \|e_u\|^p \right]\ \D s + C^1_{p,T}\, \delta^p.
\]

Similarly, using the H{\"o}lder inequality,
\begin{eqnarray*}
I_2 &\!\leq\!& 
T^{p/2-1}\int_0^t \EE\left[ \|e_s\|^{p/2}\, \left( \gamma\, (\|\hX_s\|^q \!+\! \|\bX_s\|^q) + \mu\right)^{p/2}\, 
\|\hX_s\!-\!\bX_s \|^p\, \right] \D s
\\&\!\leq\!& 
\halfs \, T^{p/2-1}  \int_0^t \EE\left[\sup_{0\leq u \leq s} \|e_u\|^p \right]\ \D s
\\&&\!\!\!\!\!\! + \ \halfs \, T^{p/2-1} 
\int_0^t \left( \EE\left[ \left( \gamma\, (\|\hX_s\|^q \!+\! \|\bX_s\|^q) + \mu\right)^{2p}\right]
\, \EE\left[ \|\hX_s\!-\!\bX_s \|^{4p}\right]\right)^{1/2}\! \D s,
\end{eqnarray*}
and hence, using stability and bounds on $\EE\left[ \|\hX_s\!-\!\bX_s \|^{4p} \right]$ 
from the proof of Theorem \ref{thm:convergence_order}, there is a constant $C^2_{p,T}$ such that
\[
I_2 \leq  \halfs\, T^{p/2-1} \int_0^t \EE\left[\sup_{0\leq u \leq s} \|e_u\|^p \right]\ \D s + C^2_{p,T}\, \delta^p.
\]

For the next term, $I_3$, we start by bounding $\|e_s\!-\!e_\sd\|$.  Since
\[
e_s\!-\!e_\sd = \int_\sd^s \left( f(\bX_u) - f(X_u)\right) \, \D u,
\]
by Jensen's inequality and Assumption \ref{assp:enhanced_Lipschitz} it follows that
\begin{eqnarray*}
\| e_s\!-\!e_\sd \|^p 
   &\leq& (\delta \, T)^{p-1} \int_\sd^s \| f(\bX_u) \!-\! f(X_u) \|^p \, \D u
\\ &\leq& (2\,\delta \, T)^{p-1}
\int_\sd^s L^p(\bX_u,X_u) \left( \|e_u\|^p + \| \hX_u\!-\!\bX_u\|^p \right) \, \D u,
\end{eqnarray*}
where $L(\bX_u,X_u)\equiv \gamma(\|\bX_u\|^k\!+\!\|X_u\|^k) + \mu$.
We again have an $O(\delta^{p/2})$ bound for $\EE[ \|\hX_s\!-\!\bX_s\|^p]$,
while Theorem \ref{thm:convergence_order} proves that there is a 
constant $c_{p,T}$ such that
\[
\EE[ \| e_s \|^p] \leq c_{p,T} \, \delta^{p/2}.
\]
Combining these, and using the H{\"o}lder inequality and the finite 
bound for $\EE[L^p(\bX_u,X_u)]$ for all $p\!\geq\! 2$, due to the 
usual stability results, we find that there is a different constant 
$c_{p,T}$ such that
\[
\EE[ \| e_s\!-\!e_\sd \|^p ] \leq c_{p,T} \, \delta^{3p/2}.
\]
Now,
\begin{eqnarray*}
I_3 \leq T^{p/2-1} \int_0^t \EE\left[ 
\| e_s\!-\!e_\sd\|^{p/2}\|\nabla f(\bX_s)\|^{p/2} \|W_s\!-\!W_\sd\|^{p/2} \right] \D s,
\end{eqnarray*}
so using the H{\"o}lder inequality and the usual stability bounds, 
we conclude that there is a constant $C^3_{p,T}$ such that
\[
I_3 \leq  C^3_{p,T}\, \delta^p.
\]

Lastly, we consider $I_4$.  For the timestep $[t_n,t_{n+1}]$, 
we have 
\[
\D \left( (t\!-\!t_{n+1}) (W_t\!-\!W_{t_n})\right) = 
(W_t\!-\!W_{t_n}) \, \D t + (t\!-\!t_{n+1})\, \D W_t
\]
and therefore, integrating by parts within each timestep,
\begin{eqnarray*}
\lefteqn{ \hspace{-0.25in}
\int_0^s \langle e_\ud, \nabla f(\bX_u) (W_u\!-\!W_\ud) \rangle \, \D u
}
\\ & = &
\int_0^s (\uo \!-\!u) \, \langle e_\ud, \nabla f(\bX_u)\, \D W_u \rangle 
\ -\ (\so \!-\!s) \langle e_\sd, \nabla f(\bX_s) (W_s\!-\!W_\sd) \rangle
\end{eqnarray*}
where $\uo = \min\{t_n: t_n\!>\!u\} = t_{n_u+1}$.
Hence,
$
I_4 \leq 2^{p/2 - 1} (I_{41}+I_{42})
$
where
\begin{eqnarray*}
I_{41} &=& \EE\left[\sup_{0\leq s\leq t} \left|
\int_0^s  (\uo \!-\!u)\, \langle e_\ud, \nabla f(\bX_u)\, \D W_u \rangle 
\right|^{p/2} \right], \\
I_{42} &=& \EE\left[\sup_{0\leq s\leq t} \left|
(\so \!-\!s)\, \langle e_\sd, \nabla f(\bX_s) (W_s\!-\!W_\sd) \rangle
\right|^{p/2} \right].
\end{eqnarray*}

By the Burkholder-Davis-Gundy inequality,
\begin{eqnarray*}
I_{41} & \leq &
C_p\, \EE\left[
\left( \int_0^t (\so \!-\!s)^2 \|e_\sd\|^2 \|\nabla f(\bX_s)\|^2\, \D s
\right)^{p/4} \right] 
\\ & \leq &
C_p\, T^{3p/4-1}\, 
\EE\left[
\int_0^t \|e_\sd\|^{p/2}\, \delta^{p/2}\, \|\nabla f(\bX_s)\|^{p/2}\, \D s
\right]
\\ & \leq &
\fracs{1}{2} C_p\, T^{3p/4-1}\, 
\EE\left[
\int_0^t \left( \sup_{0\leq u\leq s}\|e_u\|^p + \delta^p\, \|\nabla f(\bX_s)\|^p
 \right) \D s
\right]
\end{eqnarray*}
with $\EE[\|\nabla f(\bX_s)\|^p]$ uniformly bounded on $[0,T]$ so that
there is a constant $C^{41}_{p,T}$ such that
\[
I_{41} \leq \fracs{1}{2}\, C_p\, T^{3p/4-1} 
\int_0^t  \EE\left[ \sup_{0\leq u\leq s}\|e_u\|^p \right] \D s
+ C^{41}_{p,T}\, \delta^p.
\]
Turning to $I_{42}$, Young's inequality and H{\"o}lder's inequality 
give
\[
I_{42} \leq \frac{1}{2\xi} \EE\left[ \sup_{0\leq s\leq t} \|e_s\|^p \right]
+ \frac{\xi}{2} (2\delta T)^p\! \left( \EE\!\left[ 
\sup_{0\leq s\leq t} \|\nabla f\|^{2p} \right]
\EE\!\left[  \sup_{0\leq s\leq t} \|W_s\|^{2p} \right] \right)^{1/2}
\]
for any $\xi\!>\!0$, 
and hence there is a constant $C^{42}_{p,T}$ such that
\[
I_{42} \leq \frac{1}{2\xi}\, \EE\left[ \sup_{0\leq s\leq t} \|e_s\|^p \right]
+ \xi\, C^{42}_{p,T}\, \delta^p.
\]

\if 0

which we can split into two parts:
\begin{eqnarray*}
I_4 &\leq& 2^{p/2 - 1} \left\{ ~
\EE\left[\sup_{0\leq s\leq t}
\left|\sum_{k=0}^{n_s-1} \int_{t_k}^{t_{k+1}}
\langle e_{t_k}, \nabla f(\bX_{t_k}) (W_u\!-\!W_{t_{k}}) \rangle 
\, \D u\right|^{p/2}\right] \right.
\\ &&\left. ~~~~~~~~~ +\ 
\EE\left[\sup_{0\leq s\leq t}\left| \int_{\sd}^s
\langle e_\ud, \nabla f(\bX_\ud) (W_u\!-\!W_\ud) \rangle
 \,\D u\right|^{p/2}\right] ~ \right\}
\\ &:= & 2^{p/2 - 1}\, (I_{41} + I_{42}).
\label{J1decom}
\end{eqnarray*}

The summation in $I_{41}$ is a discrete martingale,
so by a combination of the discrete Burkholder-Davis-Gundy inequality 
(Theorem 6.3.10 in \cite{strook11}) and the Jensen and H\"{o}lder 
inequalities we obtain:
\begin{eqnarray*}
I_{41} &\leq&
C_p\, \EE \left[\left(  \sum_{k=0}^{n_t-1}\left| \int_{t_k}^{t_{k+1}}
\langle e_{t_k}, \nabla f(\bX_{t_k}) (W_u\!-\!W_{t_{k}}) \rangle 
\,\D u\right|^2 \right)^{p/4}\right]
\\ &\leq& 
C_p\, \EE\left[\left(T\delta  \sum_{k=0}^{n_t-1} 
\int_{t_k}^{t_{k+1}} \| e_{t_k}\|^2 \|\nabla f(\bX_{t_k})\|^2 \| W_u\!-\!W_{t_k}\|^2
\,\D u \right)^{p/4}\right]
\\ &\leq& 
C_p\, T^{p/4}\delta^{p/4}\EE\left[\left( \int_0^t
\| e_\ud \|^2 \|\nabla f(\bX_\ud)\|^2 \|W_u\!-\!W_{\ud}\|^2 
\,\D u \right)^{p/4}\right]
\\ &\leq& 
C_p\, T^{p/2-1} \delta^{p/4}\EE\left[ \int_0^t
\| e_\ud \|^{p/2} \|\nabla f(\bX_\ud)\|^{p/2} \| W_u\!-\!W_\ud\|^{p/2}
\,\D u \right]
\\ &\leq& \halfs\, C_p\, T^{p/2-1} \int_0^t\EE\left[\sup_{0\leq u\leq s}\|e_u\|^p\right]\D s 
\\ && +\  
\halfs\, C_p\, T^{p/2-1} \delta^{p/2}\int_0^t \EE\left[
\|\nabla f(\bX_\ud)\|^{2p} \right]^{1/2} 
\EE\left[\| W_u\!-\!W_\ud)\|^{2p}\right]^{1/2}\D u
\end{eqnarray*}
Hence, there exists a constant $C^{41}_{p,T}$ such that
\[
I_{41} \leq 
\halfs\, C_p\, T^{p/2-1} \int_0^t \EE\left[\sup_{0\leq u\leq s} \|e_u\|^p \right]\D s 
+ C^{41}_{p,T}\, \delta^p.
\]

Finally, for the second part $I_{42}$, by Jensen's inequality and H\"{o}lder inequality,
\begin{eqnarray*}
I_{42}&=&
C_p\, \EE\left[\sup_{0\leq s\leq t}\left| \int_\sd^s
\langle e_\ud, \nabla f(\bX_u) (W_u\!-\!W_\ud)\rangle \,\D u\right|^{p/2}\right]
\\&\leq&
C_p\, T^{p/2-1}\delta^{p/2-1} \EE\left[\sup_{0\leq s\leq t} \int_\sd^s\left|
\langle e_\ud, \nabla f(\bX_u) (W_u\!-\!W_\ud)\rangle \right|^{p/2}\,\D u\right]
\\ &\leq& 
C_p\, T^{p/2-1}\delta^{p/2-1} \EE\left[ \int_{0}^t 
\| e_\ud \|^{p/2} \|\nabla f(\bX_u)\|^{p/2}\|W_u\!-\!W_\ud\|^{p/2}\,\D u\right]
\\ &\leq& 
\halfs\, C_p\, T^{p/2-1} \int_0^t\EE\left[\sup_{0\leq u\leq s}\|e_u\|^p\right]\D s
\\ && +\ \halfs\,C_p\, T^{p/2-1} \delta^{p-2} 
\int_0^t\EE\left[ \|\nabla f(\bX_u)\|^{2p}\right]^{1/2}
\EE\left[\|W_u\!-\!W_\ud\|^{2p}\right]^{1/2} \D u
\end{eqnarray*}
Thus, we find that there exists a constant $C^{42}_{p,T}>0$ such that
\begin{equation}
I_{42} \leq \halfs \, C_p\, T^{p/2-1} \int_0^t\EE\left[\sup_{0\leq u\leq s} \|e_u\|^p \right]\D s
+ C^{42}_{p,T}\, \delta^p.
\end{equation}

\fi

Returning to (\ref{eq:thm4}), and inserting the bounds for
$I_1$, $I_2$, $I_3$, $I_4$, $I_{41}$, and $I_{42}$, with $\xi = 2^{5p/2-4}$,
gives
\[
\EE\!\left[\sup_{0\leq s\leq t}\|e_s\|^p \right] \leq
  \fracs{1}{2} \EE\!\left[\sup_{0\leq s\leq t}\|e_s\|^p \right]
+ C^5_{p,T}\! \int_0^t \EE\!\left[\sup_{0\leq u\leq s}\|e_u\|^p \right] \D s
+ C^6_{p,T} \, \delta^p,
\]
for certain constants $C^5_{p,T},C^6_{p,T}$. Rearranging and using
Gr\"{o}nwall's inequality we obtain the final conclusion that
there exists a constant $C_{p,T}$ such that
\[
\EE\left[\sup_{0\leq t\leq T}\|e_t\|^p \right] \leq C_{p,T}\, \delta^p.
\]
\end{proof}


\section{Conclusions and future work}

The central conclusion from this paper is that by using an adaptive 
timestep it is possible to make the Euler-Maruyama approximation 
stable for SDEs with a globally Lipschitz volatility and a drift 
which is not globally Lipschitz but is locally Lipschitz and
satisfies a one-sided linear growth condition.
If the drift also satisfies a one-sided Lipschitz condition then
the order of strong convergence is $\halfs$, when looking at the 
accuracy versus the expected cost of each path.  For the important 
class of Langevin equations with unit volatility, the order of 
strong convergence is 1.

The numerical experiments suggest that in some applications 
the new method may not be significantly better than the tamed 
Euler-Maruyama method proposed and analysed by Hutzenthaler,
Jentzen \& Kloeden \cite{hjk12}, but in others it is shown to
be superior.

One direction for extension of the theory is to SDEs with a 
volatility which is not globally Lipschitz, but instead satisfies 
the Khasminskii-type condition used by Mao \& Szpruch 
\cite{mao15,ms13}.
Another is to extend the analysis to Milstein approximations,
which are particularly important when the SDE is scalar or 
satisfies the commutativity condition which means that the Milstein
approximation does not require the simulation of L{\'e}vy areas.
Another possibility is to use a Lyapunov function $V(x)$ in place
of $\|x\|^2$ in the stability analysis; this might enable one to 
prove stability and convergence for a larger set of SDEs.
For SDEs such as the stochastic van der Pol oscillator and the
stochastic Lorenz equation, if we could prove exponential 
integrability using the approach of Hutzenthaler, Jentzen \& Wang 
\cite{hjw16} then it may be possible to prove the order of 
strong convergence using a local one-sided Lipschitz condition.

A future paper will address a different challenge, extending 
the analysis to ergodic SDEs over an infinite time interval.
As well as proving a slightly different stability result with 
a bound which is uniform in time, the convergence analysis will 
show that under certain conditions the error bound is also 
uniformly bounded in time.  This is in contrast to the analysis 
in this paper in which the bound increases exponentially with 
time.

\bibliographystyle{plain}
\bibliography{mlmc,mc}

\end{document}